\newtheorem{theorem}{Теорема}
\newtheorem{lemma}{Лемма}
\newtheorem{corollary}{Следствие}
\begin{document}

УДК 517. 955
\begin{center}
{\Large\bf АНАЛОГ ЗАДАЧИ КОШИ ДЛЯ НЕОДНОРОДНОГО МНОГОМЕРНОГО ПОЛИКАЛОРИЧЕСКОГО УРАВНЕНИЯ С ОПЕРАТОРОМ БЕССЕЛЯ}

\medskip
{\large
 Ш.Т.Каримов}
\end{center}
{ \it
Каримов Шахобиддин Туйчибоевич ---
кандидат физико-математических наук, доцент,
заведующий кафедрой математики,
Ферганский государственный университет.
г.Фергана, ул. Мураббийлар 19, ФерГУ, 150100, Республика Узбекистан. \,\,
E-mail: shkarimov09@rambler.ru\\}

\begin{abstract}
В работе получена явная формула решения аналога задачи Коши для неоднородного многомерного поликалорического уравнения с оператором Бесселя.  Для построения решения применен многомерный оператор Эрдейи - Кобера дробного порядка. Библ.: 21 назв.

\medskip

{\it Ключевые слова:} Задача Коши, поликалорическое уравнение, оператор Эрдейи-Кобера, дифференциальный оператор Бесселя.
\end{abstract}

\medskip

\section{Введение. Постановка задачи.}

Сингулярные параболические уравнения с оператором Бесселя относятся к классу уравнений,  вырождающихся по пространственным переменным на границе области, и они часто встречаются в приложениях. Например, в задачах теплопереноса в неподвижной среде (твердом теле), в задачах диффузионного пограничного слоя \cite{polyanin}, в задачах распространения тепла при закачке горячей жидкости в нефтяной пласт \cite{chuprov} и в других разделах науки и техники.

Вырождающиеся уравнения представляют собой один из важных разделов современной теории дифференциальных уравнений с частными производными и число опубликованных работ по этой тематике весьма значительно. В этом направлении особое место занимают начальные и краевые задачи для параболических уравнений с оператором Бесселя. Теория классических решений задачи Коши для сингулярных параболических уравнений второго порядка с оператором Бесселя развита в работах В.В. Крехивского и  М.И. Матийчука~\cite{krix1}, \cite{krix2}, D. Colton~\cite{colton}, O. Arena~\cite{arena}, И.А. Киприянова, В.В. Катрахова, В.М. Ляпина~\cite{kkl},  В.В. Крехивского~\cite{krix3}, И.И. Веренич~\cite{veren}, С.А. Терсенова~\cite{tca}, С.Д.Ивасишена и В.П. Лавренчука~\cite{ivas}, А.Б.Муравник~\cite{mab} и др. Задача Коши для сингулярных параболических уравнений в классах распределений и в классах обобщенных функций типа $S'$  изучались Я.И. Житомирским~\cite{jit}, В.В.Городецким, И.В.Житарюком, В.П.Лавренчуком~\cite{gjl} и др. Однако, начальные и краевые задачи для уравнений с оператором Бесселя высокого порядка, к настоящему времени остаются малоизученными.

Пусть $x = (x_1 ,x_2 ,...,x_n )$  точка $n$ - мерного евклидова  пространства $R^n$ и $R_ + ^n  = \left\{ {x \in R^n: \,\, x_k  > 0, \,\, k = \overline {1,n} } \right\}.$  В данной работе в области $\Omega  = \{ (x,t):x \in R_ + ^n ,\,\,t \in R_ + ^1 \},$ рассмотрим  задачу нахождения классического  решения $u(x,t)$  уравнения
\begin{equation} \label{eq1}
L_\gamma ^m (u) \equiv \left( {\frac{\partial }{{\partial t}} - \Delta _B } \right)^m u(x,t) = f(x,t),\,\,\,(x,t) \in \Omega,
\end{equation}
удовлетворяющее начальным
\begin{equation} \label{ic2}
\left. {\frac{{\partial ^k u}}{{\partial t^k }}} \right|_{t = 0}  = \varphi _k (x), \,\,
x \in R_ + ^n, \,\, k = \overline {0,m - 1}
\end{equation}
и однородным граничным условиям
\begin{equation} \label{bc3}
\left. {\frac{{\partial ^{2k + 1} u}}{{\partial x_j^{2k + 1} }}} \right|_{x_j  = 0}  = 0, \,\,\, t > 0,\,\,\,\,j = \overline {1,n} ,\,\,\,\,\,\,k = \overline {0,m - 1},
\end{equation}
где $\Delta _B  = \sum\limits_{k = 1}^n {B_{\gamma _k }^{x_k } }, $  $B_{\gamma _k }^{x_k }  ={\partial ^2 }/{\partial x_k^2 }+[(2\gamma _k  + 1)/{x_k }](\partial/{\partial x_k })$ - оператор Бесселя, действующий по переменной $x_k; $ $\gamma  = (\gamma _1 ,\gamma _2 ,...,\gamma _n ) \in R^n, $ $\gamma _k  \in R,\,\,\gamma _k  >  - 1/2,\,\,k = \overline {1,n},$ $m$ - натуральное число; $f(x,t),$ $\varphi _k (x),\,\,\,k = \overline {0,m - 1} $ - заданные дифференцируемые функции.

Отметим, что в задачах общей теории дифференциальных уравнений с частными производными, содержащих по одной или нескольким переменным оператор Бесселя, основным аппаратом исследования является соответствующее интегральное преобразование Фурье - Бесселя. 	В отличие от традиционных методов, для решения поставленной задачи применим многомерный  оператор Эрдейи - Кобера дробного порядка. Поэтому сначала рассмотрим некоторые свойства этого оператора.

\section{Многомерный оператор Эрдейи-Кобера дробного порядка}

В теории и приложениях широко используются различные модификации и обобщения классических операторов интегрирования и дифференцирования дробного порядка. К таким модификациям относятся, в частности, операторы Эрдейи - Кобера \cite{skm}.

В работе \cite{ksht1} был введен многомерный обобщенный оператор Эрдейи - Кобера в виде
\[
J_\lambda  \left( \begin{gathered}
  \alpha  \hfill \\
  \eta  \hfill \\
\end{gathered}  \right)f(x) = J_{\lambda _1 ,\lambda _2 , \ldots ,\lambda _n } \left( \begin{gathered}
  \alpha _{1} ,\,\alpha _2 ,\,...\,,\,\alpha _n  \hfill \\
  \eta _1 ,\,\eta _2 ,\,...\,\,,\,\,\eta _n  \hfill \\
\end{gathered}  \right)f(x) =
\]
\[
 = J_{\lambda _1 }^{x_1 } (\eta _1 ,\alpha _1 )J_{\lambda _2 }^{x_2 } (\eta _2 ,\alpha _2 ) \cdots J_{\lambda _n }^{x_n } (\eta _n ,\alpha _n )f(x) =
\]
\[
 = \left[ {\prod\limits_{k = 1}^n {\frac{{2x_k^{ - 2(\alpha _k  + \eta _k )} }}
{{\Gamma (\alpha _k )}}} } \right]\int\limits_0^{x_1 } {\int\limits_0^{x_2 } {...} \int\limits_0^{x_n } {\prod\limits_{k = 1}^n {\left[ {t_k^{2\eta _k  + 1} \left( {x_k^2  - t_k^2 } \right)^{\alpha _k  - 1} } \right.} } }  \times
\]
\begin{equation} \label{moek4}
\times \left. {\bar J_{\alpha _k  - 1} \left( {\lambda \sqrt {x_k^2  - t_k^2 } } \right)} \right]f(t_1 ,\,t_2 ,\,...,\,t_n )dt_1 dt_2 \,...\,dt_n,
\end{equation}
где $\lambda ,\alpha ,\eta  \in R^n, $ $\alpha _k  > 0,\,\,\eta _k  \geqslant  - 1/2,$ $k = \overline {1,\,n};$ $\Gamma (\alpha )$  - гамма - функция  Эйлера, $\bar J_\nu  (z) - $ функция Бесселя -Клиффорда, которая выражается через функции Бесселя $J_\nu  (z),$  по формуле  $\bar J_\nu  (z) = \Gamma (\nu  + 1)(z/2)^{ - \nu } J_\nu  (z),$ $J_{\lambda _k }^{x_k } (\eta _k ,\alpha _k )$ -частный интеграл Эрдейи - Кобера порядка $\alpha _k$  по $k$ -ой переменной:
\[
J_{\lambda _k }^{x_k } (\eta _k ,\alpha _k )f(x) = \frac{{2x_k ^{ - 2(\alpha _k  + \eta _k )} }}
{{\Gamma (\alpha _k )}}\int\limits_0^{x_k } {(x_k^2  - t^2 )^{\alpha _k  - 1} \bar J_{\alpha _k  - 1} \left( {\lambda _k \sqrt {x_k^2  - t^2 } } \right) \times }
\]
\[
 \times t^{2\eta _k  + 1} f(x_1 ,x_2 , \ldots ,x_{k - 1} ,t,x_{k + 1} , \ldots ,x_n )dt.
\]

В указанной работе также исследованы основные свойства оператора  (4) и показано, что обратный оператор имеет вид
\[
J_\lambda ^{ - 1} \left( \begin{gathered}
  \alpha  \hfill \\
  \eta  \hfill \\
\end{gathered}  \right)f(x) = J_{i\lambda } \left( \begin{gathered}
  \,\,\, - \alpha  \hfill \\
  \eta  + \alpha  \hfill \\
\end{gathered}  \right)f(x) =
\]
\[
 = 2^{n - \left| m \right|} \left[ {\prod\limits_{k = 1}^n {\frac{{x_k^{ - 2\eta _k } }}
{{\Gamma (m_k  - \alpha _k )}}} \left( {\frac{1}
{{x_k }}\frac{\partial }
{{\partial x_k }}} \right)^{m_k } } \right]\int\limits_0^{x_1 } {\int\limits_0^{x_2 } {...} \int\limits_0^{x_n } {\prod\limits_{k = 1}^n {\left[ {t_k^{2(\eta _k  + \alpha _k ) + 1} } \right.} } }  \times
\]
\begin{equation} \label{oek5}
 \times \left. {\left( {x_k^2  - t_k^2 } \right)^{m_k  - 1 - \alpha _k } \bar I_{m_k  - 1 - \alpha _k } \left( {\lambda \sqrt {x_k^2  - t_k^2 } } \right)} \right]f(t_1 ,\,t_2 ,\,...,\,t_n )dt_1 dt_2 \,...\,dt_n,
\end{equation}
где $\alpha _k  > 0,\,\,m_k  = [\alpha _k ] + 1,$ $\eta _k  \geqslant  - 1/2,$ $k = \overline {1,\,n}, $ $\bar I_\nu  (z) = \Gamma (\nu  + 1)(z/2)^{ - \nu } I_\nu  (z),$ $I_\nu  (z)$ - функция Бесселя мнимого аргумента;  $m = (m_1 ,\,m_2 ,\,...\,,\,m_n )$  - мультииндекс, а $\left| m \right| = m_1  + m_2  + \,...\, + m_n $  его длина.

Учитывая $\bar J_\nu  (0) = 1$  в пределе при $\lambda _k  \to 0,\,\,\,k = \overline {1,n}, $ получим
\[
J_0 \left( \begin{gathered}
  \alpha  \hfill \\
  \eta  \hfill \\
\end{gathered}  \right)f(x) = J_{0,0, \ldots ,0} \left( \begin{gathered}
  \alpha _{1} ,\,\alpha _2 ,\,...\,,\,\alpha _n  \hfill \\
  \eta _1 ,\,\eta _2 ,\,...\,\,,\,\,\eta _n  \hfill \\
\end{gathered}  \right)f(x) =
\]
\begin{equation} \label{oek6}
 = \prod\limits_{k = 1}^n {\left[ {\frac{{2x_k^{ - 2(\alpha _k  + \eta _k )} }}
{{\Gamma (\alpha _k )}}} \right]} \int\limits_0^{x_1 } {\int\limits_0^{x_2 } {...} \int\limits_0^{x_n } {\prod\limits_{k = 1}^n {\left[ {t_k^{2\eta _k  + 1} \left( {x_k^2  - t_k^2 } \right)^{\alpha _k  - 1} } \right]f(t)dt_1 dt_2 \,...\,dt_n } } },
\end{equation}

Этот оператор является многомерным аналогом обычного (не обобщенного) оператора Эрдейи-Кобера. В этом случае обратный оператор (5) примет следющий вид:
\[
J_0^{ - 1} \left( \begin{gathered}
  \alpha  \hfill \\
  \eta  \hfill \\
\end{gathered}  \right)f(x) = 2^{n - \left| m \right|} \left[ {\prod\limits_{k = 1}^n {\frac{{x_k^{ - 2\eta _k } }}
{{\Gamma (m_k  - \alpha _k )}}} \left( {\frac{1}
{{x_k }}\frac{\partial }
{{\partial x_k }}} \right)^{m_k } } \right]\int\limits_0^{x_1 } {\int\limits_0^{x_2 } {...} \int\limits_0^{x_n } {\prod\limits_{k = 1}^n {\left[ {t_k^{2(\eta _k  + \alpha _k ) + 1} } \right.} } }  \times
\]
\begin{equation} \label{ooek7}
 \times \left. {\left( {x_k^2  - t_k^2 } \right)^{m_k  - 1 - \alpha _k } } \right]f(t_1 ,\,t_2 ,\,...,\,t_n )dt_1 dt_2 \,...\,dt_n.
\end{equation}

Кроме того  в работе~\cite{ksht1} доказана следующая теорема.

\begin{theorem} \label{th1}
Пусть $\alpha _k  > 0,\,\,\,\eta _k  \geqslant  - 1/2,\,\,k = \overline {1,n};$ $f(x) \in C^2 (\Omega ^n );$
функции $x_k^{2\eta _k  + 1} B_{\eta _k }^{x_k } f(x)$  интегрируемы при $x_k  \to 0$ и $\mathop {\lim }\limits_{x_k  \to 0} x_k^{2\eta _k  + 1} f_{x_k } (x) = 0,$  $k = \overline {1,\,n}.$

Тогда
\[
(B_{\eta _k  + \alpha _k }^{x_k }  + \lambda _k^2 )J_\lambda  \left( \begin{gathered}
  \alpha  \hfill \\
  \eta  \hfill \\
\end{gathered}  \right)f(x) = J_\lambda  \left( \begin{gathered}
  \alpha  \hfill \\
  \eta  \hfill \\
\end{gathered}  \right)B_{\eta _k }^{x_k } f(x), \,\, k = \overline {1,n},
\]
в частности, если $\lambda _k  = 0,\,\,k = \overline {1,n}, $ тогда
\[
B_{\eta _k  + \alpha _k }^{x_k } J_0 \left( \begin{gathered}
  \alpha  \hfill \\
  \eta  \hfill \\
\end{gathered}  \right)f(x) = J_0 \left( \begin{gathered}
  \alpha  \hfill \\
  \eta  \hfill \\
\end{gathered}  \right)B_{\eta _k }^{x_k } f(x), \,\, k = \overline {1,n},
\]
где  $\Omega ^n  = \prod\limits_{k = 1}^n {(0,\,b_k )}  = (0,\,b_1 ) \times (0,\,b_2 ) \times \,...\, \times (0,\,b_n )$ - декартово произведение, $b_k  > 0,\,\,k = \overline {1,n}.$
\end{theorem}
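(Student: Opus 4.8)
\medskip
\noindent\textbf{Proof strategy.} The plan is to reduce everything to one variable and then bootstrap from the case $\lambda=0$. Since the operator $J_\lambda\!\binom{\alpha}{\eta}$ is the product $J_{\lambda_1}^{x_1}(\eta_1,\alpha_1)\cdots J_{\lambda_n}^{x_n}(\eta_n,\alpha_n)$ of one--variable operators acting in distinct arguments, and since $B_{\eta_k}^{x_k}$ differentiates only in $x_k$ and therefore commutes (differentiation under the integral sign) with every factor $J_{\lambda_j}^{x_j}$ with $j\neq k$, the assertion follows at once from its one--variable instance
\[
\bigl(B_{\eta+\alpha}^{x}+\lambda^{2}\bigr)J_\lambda^{x}(\eta,\alpha)f(x)=J_\lambda^{x}(\eta,\alpha)B_\eta^{x}f(x),
\]
applied in the $k$-th variable to $\bigl(\prod_{j\neq k}J_{\lambda_j}^{x_j}(\eta_j,\alpha_j)\bigr)f$, which is again $C^{2}$ and satisfies the one--variable hypotheses in $x_k$. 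From now on everything is one--dimensional and the subscript $k$ is dropped.

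I would settle the case $\lambda=0$ first, i.e. $B_{\eta+\alpha}^{x}J_0^{x}(\eta,\alpha)f=J_0^{x}(\eta,\alpha)B_\eta^{x}f$, by direct computation. The substitution $t=xs$ gives $J_0^{x}(\eta,\alpha)f(x)=\frac{2}{\Gamma(\alpha)}\int_0^1(1-s^{2})^{\alpha-1}s^{2\eta+1}f(xs)\,ds$, where the $x$-derivatives act only on $f(xs)$, so that
\[
B_{\eta+\alpha}^{x}J_0^{x}(\eta,\alpha)f-J_0^{x}(\eta,\alpha)B_\eta^{x}f=\frac{2}{\Gamma(\alpha)}\int_0^1(1-s^{2})^{\alpha-1}s^{2\eta+1}(A-C)\,ds
\]
with $A=s^{2}f''(xs)+\frac{2(\eta+\alpha)+1}{x}sf'(xs)$ and $C=f''(xs)+\frac{2\eta+1}{xs}f'(xs)$. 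The elementary identity $A-C=(s^{2}-1)C+\frac{2\alpha s}{x}f'(xs)$ raises the exponent of $(1-s^{2})$ in front of $C$ from $\alpha-1$ to $\alpha$; writing $C=\frac{1}{x}s^{-(2\eta+1)}\frac{d}{ds}\bigl[s^{2\eta+1}f'(xs)\bigr]$ and integrating that term by parts once, the integrated part vanishes at $s=1$ (as $\alpha>0$) and at $s=0$ (as $s^{2\eta+1}f'(xs)\to0$, which is automatic for $\eta>-\tfrac12$ and is forced by $\lim_{x\to0}x^{2\eta+1}f_x(x)=0$ when $\eta=-\tfrac12$), and the remaining integral equals exactly $\frac{2\alpha}{x}\int_0^1(1-s^{2})^{\alpha-1}s^{2\eta+2}f'(xs)\,ds$, which cancels the contribution of $\frac{2\alpha s}{x}f'(xs)$. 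Hence the difference is $0$ for every $\alpha>0$.

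For general $\lambda$ I would expand $\bar J_{\alpha-1}(\lambda\sqrt{x^{2}-t^{2}})$ in its everywhere--convergent power series and integrate term by term (dominated convergence on $\{0\le t\le x\le b\}$), obtaining the representation
\[
J_\lambda^{x}(\eta,\alpha)f=\sum_{p\ge0}c_p\,x^{2p}v_p,\qquad c_p=\frac{(-\lambda^{2}/4)^{p}}{p!},\quad v_p=J_0^{x}(\eta,\alpha+p)f .
\]
Applying the same expansion to $B_\eta^{x}f$ and using the case $\lambda=0$ with $\alpha$ replaced by $\alpha+p$, the right--hand side of the theorem becomes $\sum_p c_p x^{2p}B_{\eta+\alpha+p}^{x}v_p$, so it remains to verify $\bigl(B_{\eta+\alpha}^{x}+\lambda^{2}\bigr)\sum_p c_p x^{2p}v_p=\sum_p c_p x^{2p}B_{\eta+\alpha+p}^{x}v_p$. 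Here I would use the Leibniz rule $B_\mu^{x}(x^{2p}v)=4p(p+\mu)x^{2p-2}v+x^{2p}B_{\mu+2p}^{x}v$, the identity $B_{\mu+2p}^{x}=B_{\mu+p}^{x}+\frac{2p}{x}\partial_x$, and the Erd\'elyi--Kober recurrence $xv_p'=2v_{p-1}-2(\eta+\alpha+p)v_p$ for $p\ge1$ (got by differentiating the integral defining $v_p$, the $t=x$ endpoint term vanishing because $\alpha+p>1$): these collapse the surplus terms in the $p$-th summand to $4p\,x^{2p-2}v_{p-1}$, and after shifting the index and using $4(q+1)c_{q+1}=-\lambda^{2}c_q$ their sum is $-\lambda^{2}\sum_q c_q x^{2q}v_q=-\lambda^{2}J_\lambda^{x}(\eta,\alpha)f$, which is absorbed by the $+\lambda^{2}J_\lambda^{x}(\eta,\alpha)f$ term. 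This gives the identity; the assertion for all $\lambda_k=0$ is its special case.

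The hard part is not the algebra of the last step but the analysis at the origin together with the interchange arguments: one must justify differentiation under the integral and the passage of $\sum_p$ past integration and past $B_{\eta+\alpha}^{x}$ (routine, since $f\in C^{2}(\Omega^{n})$ and $\bar J_{\alpha-1}$ is entire), and one must control every integrand as $t\to0^{+}$. This is precisely where the hypotheses are used: boundedness of $x^{2\eta+1}B_\eta^{x}f$ near $x=0$ makes each $J_0^{x}(\eta,\alpha+p)B_\eta^{x}f$ an absolutely convergent integral (so the right--hand side is meaningful), and $\lim_{x\to0}x^{2\eta+1}f_x=0$ makes all integrated boundary terms at the origin disappear. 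Alternatively, one could prove the $\lambda=0$ case first for $\alpha\ge1$, where the $t=x$ endpoint term is harmless, and extend it to $0<\alpha<1$ by analytic continuation of both sides in $\alpha$.
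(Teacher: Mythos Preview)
The paper does not prove Theorem~\ref{th1}; it is quoted from the author's earlier article~\cite{ksht1}, so there is no in-paper argument to compare yours against. Your proof is correct. The reduction to one variable is legitimate because $J_\lambda\!\left(\begin{smallmatrix}\alpha\\\eta\end{smallmatrix}\right)$ factors as a product of commuting one-variable operators and $B_{\eta_k}^{x_k}$ passes through the factors with $j\ne k$ by differentiation under the integral. Your $\lambda=0$ computation is accurate: the identity $A-C=(s^{2}-1)C+\tfrac{2\alpha s}{x}f'(xs)$ holds, and after writing $C=\tfrac{1}{x}s^{-(2\eta+1)}\tfrac{d}{ds}\bigl[s^{2\eta+1}f'(xs)\bigr]$ and integrating by parts the boundary term at $s=0$ vanishes precisely under the hypothesis $\lim_{x\to0}x^{2\eta+1}f_x=0$, while the surviving integral cancels the $\tfrac{2\alpha s}{x}f'$ contribution. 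The bootstrap to general $\lambda$ via the entire expansion $J_\lambda^{x}(\eta,\alpha)f=\sum_{p\ge0}c_p\,x^{2p}v_p$ with $v_p=J_0^{x}(\eta,\alpha+p)f$ also checks out: the Leibniz rule $B_\mu^{x}(x^{2p}v)=4p(p+\mu)x^{2p-2}v+x^{2p}B_{\mu+2p}^{x}v$, the shift $B_{\mu+2p}^{x}=B_{\mu+p}^{x}+\tfrac{2p}{x}\partial_x$, and the recurrence $xv_p'=2v_{p-1}-2(\eta+\alpha+p)v_p$ (obtained exactly as you say, with the endpoint term vanishing because $\alpha+p-1>0$) combine to leave a surplus $4p\,x^{2p-2}v_{p-1}$ in the $p$-th summand, and after re-indexing and using $4(q+1)c_{q+1}=-\lambda^{2}c_q$ this sums to $-\lambda^{2}J_\lambda^{x}(\eta,\alpha)f$, which is absorbed by the $+\lambda^{2}$. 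The interchange of $B_{\eta+\alpha}^{x}$ with the series is justified by uniform convergence on compacta of $(0,b]$, and the integrability at $t=0$ of $t^{2\eta+1}B_\eta^{t}f(t)$ is exactly the boundedness hypothesis, so the right-hand side is well defined.
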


Из этой теоремы вытекает:
\begin{corollary} \label{cor1}
Пусть выполнены условия теоремы 1.  Тогда
\[
\sum\limits_{k = 1}^n {\left[ {B_{\eta _k  + \alpha _k }^{x_k }  + \lambda _k^2 } \right]} J_\lambda  \left( \begin{gathered}
  \alpha  \hfill \\
  \eta  \hfill \\
\end{gathered}  \right)f(x) = J_\lambda  \left( \begin{gathered}
  \alpha  \hfill \\
  \eta  \hfill \\
\end{gathered}  \right)\sum\limits_{k = 1}^n {\left[ {B_{\eta _k }^{x_k } } \right]f(x)},
\]
в частности, если $\eta _k  =  - 1/2,$ $k = \overline {1,n},$ тогда
\[
\sum\limits_{k = 1}^n {\left[ {B_{\alpha _k  - 1/2}^{x_k }  + \lambda _k^2 } \right]} J_\lambda  \left( \begin{gathered}
  \,\,\,\,\,\,\alpha  \hfill \\
   - 1/2 \hfill \\
\end{gathered}  \right)f(x) = J_\lambda  \left( \begin{gathered}
  \,\,\,\,\,\,\alpha  \hfill \\
   - 1/2 \hfill \\
\end{gathered}  \right)\Delta f(x),
\]
где  $\Delta f(x) \equiv \sum\limits_{k = 1}^n {\left[ {\partial ^2 f(x)/\partial x_k^2 } \right]}$
- многомерный оператор Лапласа.
\end{corollary}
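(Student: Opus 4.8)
The plan is to obtain the Corollary as an immediate consequence of Theorem~\ref{th1} together with the linearity of the transmutation operator. First I would invoke Theorem~\ref{th1} separately for each index $k=\overline{1,n}$: under the hypotheses listed there (imposed coordinatewise, hence valid simultaneously for all $k$), one has $(B_{\eta_k+\alpha_k}^{x_k}+\lambda_k^2)\,J_\lambda\binom{\alpha}{\eta}f(x)=J_\lambda\binom{\alpha}{\eta}\,B_{\eta_k}^{x_k}f(x)$. Adding these $n$ equalities and using that $J_\lambda\binom{\alpha}{\eta}$ — being the iterated integral operator~\eqref{moek4} — is linear, the right-hand sides combine into $J_\lambda\binom{\alpha}{\eta}\sum_{k=1}^{n}B_{\eta_k}^{x_k}f(x)$, while the left-hand sides give $\sum_{k=1}^{n}\bigl[B_{\eta_k+\alpha_k}^{x_k}+\lambda_k^2\bigr]J_\lambda\binom{\alpha}{\eta}f(x)$. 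This is precisely the first identity of the Corollary.

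For the special case I would set $\eta_k=-1/2$ for every $k$. Then $2\eta_k+1=0$, so the singular term in $B_{\eta_k}^{x_k}=\partial^2/\partial x_k^2+[(2\eta_k+1)/x_k]\,\partial/\partial x_k$ drops out and $B_{-1/2}^{x_k}=\partial^2/\partial x_k^2$. Consequently $B_{\eta_k+\alpha_k}^{x_k}=B_{\alpha_k-1/2}^{x_k}$ on the left, whereas on the right $\sum_{k=1}^{n}B_{-1/2}^{x_k}=\sum_{k=1}^{n}\partial^2/\partial x_k^2=\Delta$. Substituting into the identity just established yields the stated formula with the classical Laplacian $\Delta f(x)$.

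I do not expect any genuine analytic difficulty here: the Corollary is a formal consequence of Theorem~\ref{th1} plus additivity in $k$. The only points deserving a line of justification are (i) that the hypotheses of Theorem~\ref{th1}, being stated uniformly in $k$, legitimately cover all $n$ invocations at once, so the sum may be formed term by term; and (ii) that the value $\eta_k=-1/2$ is admissible — it is exactly the endpoint of the permitted range $\eta_k\geqslant-1/2$, and precisely the value at which $B_{\eta_k}^{x_k}$ reduces to $\partial^2/\partial x_k^2$, turning $\sum_{k=1}^{n}B_{-1/2}^{x_k}$ into the ordinary Laplacian.
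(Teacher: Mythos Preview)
Your argument is correct and matches the paper's approach exactly: the paper gives no separate proof for this corollary, merely indicating that it follows directly from Theorem~\ref{th1}, and your proposal supplies precisely the obvious details --- apply Theorem~\ref{th1} for each $k$, sum, and specialize $\eta_k=-1/2$ so that $B_{-1/2}^{x_k}=\partial^2/\partial x_k^2$.
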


Далее, докажем некоторые свойства многомерного обобщенного оператора Эрдейи-Кобера, которые применяются при решении поставленной задачи.

\begin{theorem} \label{th2}
Пусть $\alpha _k  > 0,\,\,\,\eta _k  \geqslant  - 1/2,\,\,\,k = \overline {1,n},$ $f(x) \in C^{2n} (\Omega ^n ),$ $x_k^{2\eta _k  + 1} B_{\eta _k }^{x_k } f(x)$  - интегрируемы в окрестности $x_k  = 0$
 и $\mathop {\lim }\limits_{x_k  \to 0} x_k^{2\eta _k  + 1} f_{x_k } (x) = 0,$   $k = \overline {1,\,n}. $

Тогда  имеет место равенство
\[
\prod\limits_{k = 1}^n {(B_{\eta _k  + \alpha _k }^{x_k }  + \lambda _k^2 )} J_\lambda  \left( \begin{gathered}
  \alpha  \hfill \\
  \eta  \hfill \\
\end{gathered}  \right)f(x) = J_\lambda  \left( \begin{gathered}
  \alpha  \hfill \\
  \eta  \hfill \\
\end{gathered}  \right)\prod\limits_{k = 1}^n {B_{\eta _k }^{x_k } f(x)},
\]
в частности, если $\eta _k  =  - 1/2,$ $k = \overline {1,n},$ тогда
\[
\prod\limits_{k = 1}^n {\left[ {B_{\alpha _k  - (1/2)}^{x_k }  + \lambda _k^2 } \right]} J_\lambda  \left( \begin{gathered}
 \,\,\,\,\,\, \alpha  \hfill \\
  - 1/2  \hfill \\
\end{gathered}  \right)f(x) = J_\lambda  \left( \begin{gathered}
 \,\,\,\,\,\, \alpha  \hfill \\
  - 1/2 \hfill \\
\end{gathered}  \right)\frac{{\partial ^{2n} f(x)}}
{{\partial x_1^2 \partial x^2  \ldots \partial x_n^2 }}.
\]
\end{theorem}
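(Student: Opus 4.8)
\medskip
\noindent\emph{Plan of proof.} The plan is to reduce Theorem~\ref{th2} to the single-index identity of Theorem~\ref{th1} by iterating that identity $n$ times. Write $P_k:=B_{\eta_k+\alpha_k}^{x_k}+\lambda_k^2$ and $Q_k:=B_{\eta_k}^{x_k}$, so that the assertion reads $P_1P_2\cdots P_n\,J_\lambda\binom{\alpha}{\eta}f=J_\lambda\binom{\alpha}{\eta}\,Q_1Q_2\cdots Q_nf$. The first remark is that $P_1,\dots,P_n$ pairwise commute (and likewise the $Q_k$, and $Q_j$ with $P_k$ for $j\neq k$): indeed $P_k$ and $Q_k$ are ordinary differential operators acting in the single variable $x_k$, while the hypothesis $f\in C^{2n}(\Omega^n)$ guarantees that every mixed derivative occurring below exists, is continuous, and may be freely interchanged.

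The iteration then runs as follows. By Theorem~\ref{th1} with index $k=n$ we have $P_nJ_\lambda\binom{\alpha}{\eta}f=J_\lambda\binom{\alpha}{\eta}Q_nf$. Applying $P_{n-1}$ to both sides, the left side becomes $P_{n-1}P_nJ_\lambda\binom{\alpha}{\eta}f$, and on the right we evaluate $P_{n-1}J_\lambda\binom{\alpha}{\eta}(Q_nf)$ by Theorem~\ref{th1} with index $k=n-1$ applied to the function $Q_nf$ in place of $f$, obtaining $J_\lambda\binom{\alpha}{\eta}Q_{n-1}Q_nf$. Repeating this --- applying $P_{n-2},\dots,P_1$ in turn and at each step invoking Theorem~\ref{th1} with the next index on the function $Q_{j+1}\cdots Q_nf$ --- yields after $n$ steps $P_1\cdots P_nJ_\lambda\binom{\alpha}{\eta}f=J_\lambda\binom{\alpha}{\eta}Q_1\cdots Q_nf$, which is exactly the claimed identity. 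The special case $\eta_k=-1/2$, $k=\overline{1,n}$, is then immediate, since $B_{-1/2}^{x_k}=\partial^2/\partial x_k^2$ forces $Q_1\cdots Q_nf=\partial^{2n}f/(\partial x_1^2\cdots\partial x_n^2)$, while $B_{\eta_k+\alpha_k}^{x_k}=B_{\alpha_k-1/2}^{x_k}$.

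What requires genuine care --- and is, I expect, the only real obstacle --- is verifying at each stage that the intermediate function $g_j:=Q_{j+1}Q_{j+2}\cdots Q_nf$ still satisfies the hypotheses of Theorem~\ref{th1}: that $g_j\in C^2(\Omega^n)$, that $x_k^{2\eta_k+1}B_{\eta_k}^{x_k}g_j$ is bounded as $x_k\to 0$, and that $\lim_{x_k\to 0}x_k^{2\eta_k+1}(g_j)_{x_k}=0$ for every $k$. The regularity follows because $g_j$ is obtained from $f\in C^{2n}$ by $n-j$ operators of order two, so $g_j\in C^{2j}(\Omega^n)\subset C^2(\Omega^n)$. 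For the boundary behaviour in a variable $x_k$ with $k\leq j$, one uses that $Q_{j+1},\dots,Q_n$ act in variables different from $x_k$, so that $B_{\eta_k}^{x_k}$ and $\partial_{x_k}$ commute past them and the weight $x_k^{2\eta_k+1}$ may be carried inside; the required bound and limit for $g_j$ then reduce, after interchanging the limit $x_k\to 0$ with the operators $Q_{j+1},\dots,Q_n$ (which involve only the fixed positive auxiliary variables and hence depend smoothly on them as parameters), to the corresponding hypotheses on $f$, the interchange being legitimate because $f\in C^{2n}$ makes all the relevant convergences locally uniform. Managing this bookkeeping cleanly is where the effort lies; equivalently one may organise the whole argument as an induction on $n$, writing $J_\lambda\binom{\alpha}{\eta}=J_{\lambda_1}^{x_1}\circ\widetilde J$ with $\widetilde J$ the $(n-1)$-dimensional operator in the variables $x_2,\dots,x_n$, applying the inductive hypothesis to $\widetilde J$ with $x_1$ held fixed as a parameter, and closing with a single application of Theorem~\ref{th1} in the variable $x_1$.
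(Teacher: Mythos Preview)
Your approach is correct and matches the paper's: the paper's entire proof of Theorem~\ref{th2} is the single line ``the proof of Theorem~2 is analogous to the proof of Theorem~1,'' and your iteration of Theorem~\ref{th1} across the indices $k=n,n-1,\dots,1$ is precisely the natural way to make that precise. Your explicit bookkeeping on the intermediate functions $g_j=Q_{j+1}\cdots Q_nf$ (regularity from $f\in C^{2n}$, and reduction of the boundary conditions in $x_j$ to those on $f$ via commutation with the operators in the remaining variables) already supplies more detail than the paper does.
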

Доказательство теоремы 2  аналогично доказательству теоремы 1.

	Пусть $\left[ {B_{\eta _k }^{x_k } } \right]^0  = E,$ где $E$ -единичный оператор,  $\left[ {B_{\eta _k }^{x_k } } \right]^{m_k }  =$ $\left[ {B_{\eta _k }^{x_k } } \right]^{m_k  - 1} \left[ {B_{\eta _k }^{x_k } } \right] =$ $\left[ {B_{\eta _k }^{x_k } } \right]\left[ {B_{\eta _k }^{x_k } } \right] \cdots \left[ {B_{\eta _k }^{x_k } } \right]$  - $m_k $  -ая степень оператора $B_{\eta _k }^{x_k } ,\,\,k = \overline {0,n}.$

\begin{theorem} \label{th3}
Пусть $\alpha _k  > 0,\,\,\,\eta _k  \geqslant  - 1/2;$ $f(x) \in C^{2m_0 } (\Omega ^n );$ $x_k^{2\eta _k  + 1} \left[ {B_{\eta _k }^{x_k } } \right]^{p_k  + 1} f(x)$  интегрируемы при $x_k  \to 0$  и $\mathop {\lim }\limits_{x_k  \to 0} x_k^{2\eta _k  + 1} (\partial/{\partial x_k }) \left[ {B_{\eta _k }^{x_k } } \right]^{p_k } f(x) = 0,$ $p_k  = \overline {0,m_k  - 1},$ $k = \overline {1,\,n}.$
Тогда
\begin{equation} \label{koek8}
\left[ {B_{\eta _k  + \alpha _k }^{x_k }  + \lambda _k^2 } \right]^{m_k } J_\lambda  \left( \begin{gathered}
  \alpha  \hfill \\
  \eta  \hfill \\
\end{gathered}  \right)f(x) = J_\lambda  \left( \begin{gathered}
  \alpha  \hfill \\
  \eta  \hfill \\
\end{gathered}  \right)\left[ {B_{\eta _k }^{x_k } } \right]^{m_k } f(x),\,\,k = \overline {1,n},
\end{equation}
где $m_0  = \max \{ m_1 ,m_2 , \ldots ,m_n \}. $
\end{theorem}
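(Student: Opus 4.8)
\emph{Plan of proof.} The plan is to obtain~(\ref{koek8}), for each fixed $k$, by applying Theorem~\ref{th1} exactly $m_k$ times to the successive functions
\[
g_p(x) := \left[ B_{\eta_k}^{x_k} \right]^p f(x), \qquad p = 0,1,\ldots,m_k-1,
\]
the point being that no new computation is needed beyond checking that each $g_p$ satisfies the hypotheses of Theorem~\ref{th1}. First I would record that, for a fixed index $k$, the $k$-th intertwining relation of Theorem~\ref{th1} uses only the conditions imposed at the same index $k$: writing $J_\lambda\binom{\alpha}{\eta}=\bigl(\prod_{j\neq k}J_{\lambda_j}^{x_j}\bigr)J_{\lambda_k}^{x_k}$ and observing that $B_{\eta_k}^{x_k}$ and $B_{\eta_k+\alpha_k}^{x_k}+\lambda_k^2$ act only in the variable $x_k$ and hence commute with every factor $J_{\lambda_j}^{x_j}$ with $j\neq k$, the $n$-dimensional identity of Theorem~\ref{th1} reduces to the one-dimensional transmutation property of the factor $J_{\lambda_k}^{x_k}$ in the variable $x_k$, the remaining variables playing the role of parameters. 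Consequently, to apply Theorem~\ref{th1} to a function $g$ and obtain its $k$-th identity it suffices that $g\in C^2(\Omega^n)$, that $x_k^{2\eta_k+1}B_{\eta_k}^{x_k}g(x)$ be bounded as $x_k\to0$, and that $\lim_{x_k\to0}x_k^{2\eta_k+1}(\partial g/\partial x_k)(x)=0$.

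Next I would verify these conditions for $g=g_p$, $0\le p\le m_k-1$. Since operators in different variables commute, $g_p$ is obtained from $f$ by $2p$ differentiations in $x_k$, so $g_p\in C^{2(m_0-p)}(\Omega^n)\subset C^2(\Omega^n)$, because $f\in C^{2m_0}(\Omega^n)$ and $p\le m_k-1\le m_0-1$. Moreover $B_{\eta_k}^{x_k}g_p=\left[B_{\eta_k}^{x_k}\right]^{p+1}f$, so the boundedness of $x_k^{2\eta_k+1}B_{\eta_k}^{x_k}g_p(x)$ near $x_k=0$ is exactly the assumption on $x_k^{2\eta_k+1}\left[B_{\eta_k}^{x_k}\right]^{p_k+1}f(x)$ with $p_k=p$, and $\lim_{x_k\to0}x_k^{2\eta_k+1}(\partial g_p/\partial x_k)(x)=\lim_{x_k\to0}x_k^{2\eta_k+1}(\partial/\partial x_k)\left[B_{\eta_k}^{x_k}\right]^p f(x)=0$ is the assumption with $p_k=p$. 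Hence Theorem~\ref{th1} applies to each $g_p$, $0\le p\le m_k-1$, giving
\[
J_\lambda\binom{\alpha}{\eta}B_{\eta_k}^{x_k}g_p(x)=\left(B_{\eta_k+\alpha_k}^{x_k}+\lambda_k^2\right)J_\lambda\binom{\alpha}{\eta}g_p(x).
\]

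Finally I would chain these relations. Starting from $\left[B_{\eta_k}^{x_k}\right]^{m_k}f=B_{\eta_k}^{x_k}g_{m_k-1}$, applying the last identity with $p=m_k-1$, then using $g_{m_k-1}=B_{\eta_k}^{x_k}g_{m_k-2}$ together with the identity for $p=m_k-2$, and continuing down to $p=0$, one obtains
\[
J_\lambda\binom{\alpha}{\eta}\left[B_{\eta_k}^{x_k}\right]^{m_k}f(x)=\left(B_{\eta_k+\alpha_k}^{x_k}+\lambda_k^2\right)^{m_k}J_\lambda\binom{\alpha}{\eta}f(x),
\]
which is~(\ref{koek8}) for this $k$; since $k\in\{1,\ldots,n\}$ is arbitrary, the theorem follows. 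I expect the only real obstacle to be the bookkeeping that keeps every step of the chain meaningful: one must know that $J_\lambda\binom{\alpha}{\eta}g_p$, and then $\left(B_{\eta_k+\alpha_k}^{x_k}+\lambda_k^2\right)^{j}J_\lambda\binom{\alpha}{\eta}g_p$, has enough smoothness in $x_k$ to be differentiated again. This regularity is delivered by Theorem~\ref{th1} itself — each relation $J_\lambda\binom{\alpha}{\eta}g_{p+1}=\left(B_{\eta_k+\alpha_k}^{x_k}+\lambda_k^2\right)J_\lambda\binom{\alpha}{\eta}g_p$ certifies that the function on its right is as smooth as $J_\lambda\binom{\alpha}{\eta}g_{p+1}$ — and the hypotheses on the iterated operators $\left[B_{\eta_k}^{x_k}\right]^{p_k}f$ are exactly what carry this regularity through all $m_k$ steps; the requirement $f\in C^{2m_0}(\Omega^n)$ with $m_0=\max\{m_1,\ldots,m_n\}$ ensures that $\left[B_{\eta_k}^{x_k}\right]^{m_k}f$ is defined for each $k$.
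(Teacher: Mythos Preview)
Your proof is correct and is essentially the same argument the paper gives: the paper proves~(\ref{koek8}) by induction on $m_k$, with base case $m_k=1$ being Theorem~\ref{th1} and inductive step $\left[B_{\eta_k+\alpha_k}^{x_k}+\lambda_k^2\right]^{l_k+1}J_\lambda f=\left[B_{\eta_k+\alpha_k}^{x_k}+\lambda_k^2\right]J_\lambda\left[B_{\eta_k}^{x_k}\right]^{l_k}f$ followed by one more application of Theorem~\ref{th1} to $\left[B_{\eta_k}^{x_k}\right]^{l_k}f$ using the limit hypothesis at level $l_k$. Your chaining of Theorem~\ref{th1} over the functions $g_p=\left[B_{\eta_k}^{x_k}\right]^p f$, $p=0,\ldots,m_k-1$, is exactly this induction written out explicitly.
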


Заметим, что теорема 3 верна и в том случае, когда некоторые или все $\lambda _k  = 0,\,\,k = \overline {1,n}.$

\begin{proof} Теорему 3 можно доказать методом математической индукции по $m_k ,\,\,k = \overline {1,n}. $ Произвольно фиксируем $k \in N,$ где $N$-множество натуральных чисел. Доказательство формулы (8) при фиксированном $k$  и $m_k  = 1$ приведено в теореме 1. Предположим, что равенство (8) имеет место при $m_k  = l_k $  и докажем, что оно справедливо при $m_k  = l_k  + 1.$
Из равенства
\[
\left[ {B_{\eta _k  + \alpha _k }^{x_k }  + \lambda _k^2 } \right]^{l_k  + 1} J_\lambda  \left( \begin{gathered}
  \alpha  \hfill \\
  \eta  \hfill \\
\end{gathered}  \right)f(x) = \left[ {B_{\eta _k  + \alpha _k }^{x_k }  + \lambda _k^2 } \right]\left[ {B_{\eta _k  + \alpha _k }^{x_k }  + \lambda _k^2 } \right]^{l_k } J_\lambda  \left( \begin{gathered}
  \alpha  \hfill \\
  \eta  \hfill \\
\end{gathered}  \right)f(x)
\]
по предположению индукции при выполнении условий теоремы 3, имеем
\[
\left[ {B_{\eta _k  + \alpha _k }^{x_k }  + \lambda _k^2 } \right]\left[ {B_{\eta _k  + \alpha _k }^{x_k }  + \lambda _k^2 } \right]^{l_k } J_\lambda  \left( \begin{gathered}
  \alpha  \hfill \\
  \eta  \hfill \\
\end{gathered}  \right)f(x) = \left[ {B_{\eta _k  + \alpha _k }^{x_k }  + \lambda _k^2 } \right]J_\lambda  \left( \begin{gathered}
  \alpha  \hfill \\
  \eta  \hfill \\
\end{gathered}  \right)\left[ {B_{\eta _k }^{x_k } } \right]^{l_k } f(x).
\]
При выполнении условий $\mathop {\lim }\limits_{x_k  \to 0} x_k^{2\eta _k  + 1} (\partial/{\partial x_k })\left[ {B_{\eta _k }^{x_k } } \right]^{l_k } f(x) = 0,\,\,\,k = \overline {1,n},$ вытекает условия применимости теоремы 1 к функциям $\left[ {B_{\eta _k }^{x_k } } \right]^{l_k } f(x).$ Поэтому в последнем равенстве применяя теорему 1,  получим справедливость формулы (8).
\end{proof}
Из теоремы 3 вытекает следующее
\begin{corollary} \label{cor2} Пусть выполнены условия теоремы 3. Тогда
\[
\sum\limits_{k = 1}^n {\left[ {B_{\eta _k  + \alpha _k }^{x_k }  + \lambda _k^2 } \right]^{m_k } } J_\lambda  \left( \begin{gathered}
  \alpha  \hfill \\
  \eta  \hfill \\
\end{gathered}  \right)f(x) = J_\lambda  \left( \begin{gathered}
  \alpha  \hfill \\
  \eta  \hfill \\
\end{gathered}  \right)\sum\limits_{k = 1}^n {\left[ {B_{\eta _k }^{x_k } } \right]^{m_k } f(x)},
\]
кроме того, если $f(x) \in C^{2\left| m \right|} (\Omega ^n ),$ тогда
\begin{equation} \label{koek9}
\prod\limits_{k = 1}^n {\left[ {B_{\eta _k  + \alpha _k }^{x_k }  + \lambda _k^2 } \right]^{m_k } } J_\lambda  \left( \begin{gathered}
  \alpha  \hfill \\
  \eta  \hfill \\
\end{gathered}  \right)f(x) = J_\lambda  \left( \begin{gathered}
  \alpha  \hfill \\
  \eta  \hfill \\
\end{gathered}  \right)\prod\limits_{k = 1}^n {\left[ {B_{\eta _k }^{x_k } } \right]^{m_k } f(x)}.
\end{equation}
\end{corollary}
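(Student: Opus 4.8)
The plan is to derive both identities from Theorem~\ref{th3} by iterating formula~\eqref{koek8}; throughout, write $\mathcal{J}$ for the operator $J_\lambda\left(\begin{smallmatrix}\alpha\\\eta\end{smallmatrix}\right)$ of \eqref{moek4}. The additive identity needs only the linearity of $\mathcal{J}$: writing \eqref{koek8} for every $k=\overline{1,n}$ and adding the $n$ equalities yields
\[
\sum_{k=1}^{n}\bigl[B_{\eta_k+\alpha_k}^{x_k}+\lambda_k^2\bigr]^{m_k}\mathcal{J}f(x)=\sum_{k=1}^{n}\mathcal{J}\bigl[B_{\eta_k}^{x_k}\bigr]^{m_k}f(x)=\mathcal{J}\sum_{k=1}^{n}\bigl[B_{\eta_k}^{x_k}\bigr]^{m_k}f(x).
\]
So the real content is the multiplicative identity~\eqref{koek9}.

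The key observation for \eqref{koek9} is that operators carrying different indices act on different variables and therefore commute pairwise --- this concerns the $B_{\eta_k+\alpha_k}^{x_k}+\lambda_k^2$, the $B_{\eta_k}^{x_k}$, the one-dimensional factors $J_{\lambda_k}^{x_k}$ of $\mathcal{J}$, and multiplication by $x_k^{2\eta_k+1}$. I would establish, by a finite induction over $j=0,1,\dots,n$, the chain
\[
\prod_{k=1}^{n}\bigl[B_{\eta_k+\alpha_k}^{x_k}+\lambda_k^2\bigr]^{m_k}\mathcal{J}f(x)=\prod_{k=1}^{n-j}\bigl[B_{\eta_k+\alpha_k}^{x_k}+\lambda_k^2\bigr]^{m_k}\,\mathcal{J}\prod_{k=n-j+1}^{n}\bigl[B_{\eta_k}^{x_k}\bigr]^{m_k}f(x),
\]
which is trivial for $j=0$ and coincides with \eqref{koek9} for $j=n$. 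In the step from $j$ to $j+1$ one moves the factor with index $n-j$ to the right-hand end of the left product (legitimate, since it commutes with the remaining factors, which carry other indices), so that $\bigl[B_{\eta_{n-j}+\alpha_{n-j}}^{x_{n-j}}+\lambda_{n-j}^2\bigr]^{m_{n-j}}$ stands directly before $\mathcal{J}$; then one applies \eqref{koek8} with index $k=n-j$ to $g(x)=\prod_{k=n-j+1}^{n}\bigl[B_{\eta_k}^{x_k}\bigr]^{m_k}f(x)$, and --- using that $\bigl[B_{\eta_{n-j}}^{x_{n-j}}\bigr]^{m_{n-j}}$ commutes with the operators occurring in $g$ --- the right side becomes $\mathcal{J}\prod_{k=n-j}^{n}\bigl[B_{\eta_k}^{x_k}\bigr]^{m_k}f(x)$, which is the chain identity for $j+1$. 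Taking $j=n$ gives \eqref{koek9}.

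The only genuine obstacle is to check that at each stage $g(x)=\prod_{k=n-j+1}^{n}\bigl[B_{\eta_k}^{x_k}\bigr]^{m_k}f(x)$ still satisfies the hypotheses under which \eqref{koek8} holds for the single index $k=n-j$. The smoothness is exactly what the strengthened assumption $f\in C^{2|m|}(\Omega^n)$ is there to guarantee: since $\prod_{k>n-j}\bigl[B_{\eta_k}^{x_k}\bigr]^{m_k}$ has order $2\sum_{k>n-j}m_k$, we get $g\in C^{2\sum_{k\le n-j}m_k}(\Omega^n)$, and because $m_{n-j}$ is one of these summands while each $m_k\ge1$, in particular $g\in C^{2m_{n-j}}(\Omega^n)$, which suffices for \eqref{koek8} with index $n-j$ (whose proof in Theorem~\ref{th3} rests on Theorem~\ref{th1} and hence on $C^2$-smoothness). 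For the conditions at $x_{n-j}=0$, the operators $\bigl[B_{\eta_k}^{x_k}\bigr]^{m_k}$ with $k\neq n-j$ commute with $B_{\eta_{n-j}}^{x_{n-j}}$, with $\partial/\partial x_{n-j}$ and with multiplication by $x_{n-j}^{2\eta_{n-j}+1}$, whence
\[
x_{n-j}^{2\eta_{n-j}+1}\bigl[B_{\eta_{n-j}}^{x_{n-j}}\bigr]^{p+1}g(x)=\Bigl(\prod_{k>n-j}\bigl[B_{\eta_k}^{x_k}\bigr]^{m_k}\Bigr)x_{n-j}^{2\eta_{n-j}+1}\bigl[B_{\eta_{n-j}}^{x_{n-j}}\bigr]^{p+1}f(x),
\]
and likewise for the quantity under the limit sign; so the boundedness of these expressions as $x_{n-j}\to0$ and the vanishing of the limit for $g$ reduce to the very conditions imposed on $f$ in Theorem~\ref{th3} for the index $n-j$, the extra differentiations in the other variables not disturbing the limiting behaviour in $x_{n-j}$. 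Once this inheritance of hypotheses is secured, the induction closes and Corollary~\ref{cor2} follows.
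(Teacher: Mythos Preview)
Your proposal is correct and follows exactly the route the paper intends: Corollary~\ref{cor2} is stated in the paper without proof, as an immediate consequence of Theorem~\ref{th3}, and your argument --- summing~\eqref{koek8} over $k$ for the first identity, then iterating~\eqref{koek8} across the indices (using that operators in different variables commute) for~\eqref{koek9} --- is precisely the implicit derivation. Your careful verification that the intermediate functions $g$ inherit the hypotheses of Theorem~\ref{th3}, and your identification of the extra smoothness $f\in C^{2|m|}(\Omega^n)$ as the ingredient enabling the product version, actually supply more detail than the paper itself gives.
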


\begin{theorem} \label{th4} Пусть $\alpha _k  > 0,\,\,\,\eta _k  \geqslant  - 1/2,\,\,k = \overline {1,n},$
$q \in N;$ $f(x) \in C^{2q} (\Omega ^n );$ функции $x_k^{2\eta _k  + 1} \left[ {B_{\eta _k }^{x_k } } \right]^{l + 1} f(x)$ интегрируемы в нуле и $\mathop {\lim }\limits_{x_k  \to 0} x_k^{2\eta _k  + 1} (\partial/{\partial x_k })\left[ {B_{\eta _k }^{x_k } } \right]^l f(x) = 0,$ $l = \overline {0,q - 1},$ $k = \overline {1,\,n}, $ Тогда имеет место равенство
\[
\left[ {\sum\limits_{k = 1}^n {\left( {B_{\eta _k  + \alpha _k }^{x_k }  + \lambda _k^2 } \right)} } \right]^q J_\lambda  \left( \begin{gathered}
  \alpha  \hfill \\
  \eta  \hfill \\
\end{gathered}  \right)f(x) = J_\lambda  \left( \begin{gathered}
  \alpha  \hfill \\
  \eta  \hfill \\
\end{gathered}  \right)\left[ {\sum\limits_{k = 1}^n {B_{\eta _k }^{x_k } } } \right]^q f(x).
\]
\end{theorem}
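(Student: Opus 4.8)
Write $J_\lambda$ for the generalized Erdélyi--Kober operator $J_\lambda\left(\begin{smallmatrix}\alpha\\\eta\end{smallmatrix}\right)$ of~(4), and put $S_\eta:=\sum_{k=1}^{n}B_{\eta_k}^{x_k}$ and $S_\alpha:=\sum_{k=1}^{n}\bigl(B_{\eta_k+\alpha_k}^{x_k}+\lambda_k^2\bigr)$, so that the assertion to be proved reads $S_\alpha^{\,q}\,J_\lambda f=J_\lambda\,S_\eta^{\,q}f$. The plan is to argue by induction on $q$, with Corollary~1 serving both as the base case and as the single-step tool in the inductive step. For $q=1$ the identity is precisely the first assertion of Corollary~1, and the hypotheses of Theorem~4 specialised to $q=1$ --- namely $f\in C^{2}(\Omega^{n})$ together with boundedness of $x_k^{2\eta_k+1}B_{\eta_k}^{x_k}f$ near $x_k=0$ and $\lim_{x_k\to0}x_k^{2\eta_k+1}f_{x_k}=0$ --- are exactly the hypotheses of Theorem~1, on which Corollary~1 rests. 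This gives the base of the induction.

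Assume now the identity for exponent $q$, and let $f$ satisfy the hypotheses of Theorem~4 for exponent $q+1$. These hypotheses contain those for exponent $q$ (one more order of smoothness is required, and the boundary conditions are imposed for the wider range $l=\overline{0,q}$), so the induction hypothesis applies to $f$ and yields $S_\alpha^{\,q}J_\lambda f=J_\lambda S_\eta^{\,q}f$; hence $S_\alpha^{\,q+1}J_\lambda f=S_\alpha\bigl(S_\alpha^{\,q}J_\lambda f\bigr)=S_\alpha\,J_\lambda\bigl(S_\eta^{\,q}f\bigr)$. It therefore suffices to apply Corollary~1 to the function $g:=S_\eta^{\,q}f$, since then $S_\alpha J_\lambda g=J_\lambda S_\eta g=J_\lambda S_\eta^{\,q+1}f$ and the induction closes. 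Everything thus reduces to verifying that $g$ lies in the class of functions to which Theorem~1, and hence Corollary~1, applies.

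This verification is the crux. That $g\in C^{2}(\Omega^{n})$ is immediate from $f\in C^{2(q+1)}(\Omega^{n})$. For the boundary behaviour, expand by the multinomial theorem: $S_\eta^{\,q}=\sum_{a_1+\dots+a_n=q}\dfrac{q!}{a_1!\cdots a_n!}\prod_{j=1}^{n}\bigl[B_{\eta_j}^{x_j}\bigr]^{a_j}$, the factors commuting as they act in disjoint variables. Fix $k$. For each summand the operators acting only in the variable $x_k$ --- multiplication by $x_k^{2\eta_k+1}$, the derivative $\partial/\partial x_k$, and powers of $B_{\eta_k}^{x_k}$ --- commute with the operators $\bigl[B_{\eta_j}^{x_j}\bigr]^{a_j}$ ($j\ne k$), which act in the other variables; consequently the corresponding term of $x_k^{2\eta_k+1}g_{x_k}$ equals $\bigl(\prod_{j\ne k}[B_{\eta_j}^{x_j}]^{a_j}\bigr)\bigl(x_k^{2\eta_k+1}(\partial/\partial x_k)[B_{\eta_k}^{x_k}]^{a_k}f\bigr)$, and the corresponding term of $x_k^{2\eta_k+1}B_{\eta_k}^{x_k}g$ equals $\bigl(\prod_{j\ne k}[B_{\eta_j}^{x_j}]^{a_j}\bigr)\bigl(x_k^{2\eta_k+1}[B_{\eta_k}^{x_k}]^{a_k+1}f\bigr)$. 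Since $a_k\le q$, the hypotheses of Theorem~4 for exponent $q+1$ give $\lim_{x_k\to0}x_k^{2\eta_k+1}(\partial/\partial x_k)[B_{\eta_k}^{x_k}]^{a_k}f=0$ and boundedness of $x_k^{2\eta_k+1}[B_{\eta_k}^{x_k}]^{a_k+1}f$ near $x_k=0$. The delicate point --- and the main obstacle --- is that these limits and bounds must be shown to survive the application of the $x_j$-differential operators $\prod_{j\ne k}[B_{\eta_j}^{x_j}]^{a_j}$: this amounts to knowing that the convergence and boundedness as $x_k\to0$ are locally uniform in the remaining variables and preserved under differentiation in them, which follows from $f\in C^{2(q+1)}(\Omega^{n})$ --- all of the mixed derivatives occurring are then continuous, so the limit in $x_k$ may be interchanged with the $x_j$-differentiations --- together with the assumed boundedness. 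Summing the finitely many summands then yields the boundedness of $x_k^{2\eta_k+1}B_{\eta_k}^{x_k}g$ near $x_k=0$ and $\lim_{x_k\to0}x_k^{2\eta_k+1}g_{x_k}=0$ for every $k$; this completes the inductive step and the proof. As in the proof of Theorem~3, the real work is exactly this last step: showing that the function obtained by intertwining one power through $J_\lambda$ still meets the boundary requirements needed to intertwine the next power.
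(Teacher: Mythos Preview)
Your proof is correct at the level of rigor the paper operates at, but it takes a different route from the paper's own argument. The paper does not induct on $q$; instead it applies the multinomial formula
\[
\Bigl[\sum_{k=1}^{n}\bigl(B_{\eta_k+\alpha_k}^{x_k}+\lambda_k^2\bigr)\Bigr]^{q}
=\sum_{|m|=q}\frac{q!}{m!}\prod_{k=1}^{n}\bigl(B_{\eta_k+\alpha_k}^{x_k}+\lambda_k^2\bigr)^{m_k}
\]
directly and then invokes identity~(9) of Corollary~2 term by term, which immediately yields the result in one step. Your argument, by contrast, inducts on $q$ using only Corollary~1, and the multinomial expansion appears not at the outer level but inside the verification that $g=S_\eta^{\,q}f$ satisfies the hypotheses of Theorem~1. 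The paper's route is shorter because the work of handling products of Bessel operators in different variables has already been packaged into Corollary~2; your route is more self-contained, relying only on the $q=1$ case, but you end up redoing essentially the same cross-variable boundary verification that underlies Corollary~2. The ``delicate point'' you flag --- that the limit conditions in $x_k$ must survive differentiation in the other variables --- is genuinely present in both approaches (it is implicit in the passage from Theorem~3 to equation~(9) as well), and the paper simply does not comment on it.
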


Данная теорема доказывается с использованием полиномиальной  формулы $\left[ {\sum\limits_{k = 1}^n {\left( {B_{\eta _k  + \alpha _k }^{x_k }  + \lambda _k^2 } \right)} } \right]^q  = \sum\limits_{\left| m \right| = q} {\frac{{q!}}{{m!}}\prod\limits_{k = 1}^n {\left( {B_{\eta _k  + \alpha _k }^{x_k }  + \lambda _k^2 } \right)^{m_k } } } $ и с применением равенства (9), где $m! = m_1 !m_2 ! \ldots m_n !.$

\begin{corollary} \label{cor3}
 Пусть выполнены условия теоремы 4. Тогда при $\eta _k  =  - 1/2,\,\,k = \overline {1,n}, $ справедливо равенство
\[
\left[ {\sum\limits_{k = 1}^n {\left( {B_{\alpha _k  - 1/2}^{x_k }  + \lambda _k^2 } \right)} } \right]^q J_\lambda  \left( \begin{gathered}
  \,\,\,\,\,\alpha  \hfill \\
   - 1/2 \hfill \\
\end{gathered}  \right)f(x) = J_\lambda  \left( \begin{gathered}
  \,\,\,\,\,\alpha  \hfill \\
   - 1/2 \hfill \\
\end{gathered}  \right)\Delta ^q f(x),
\]
в частности, при $\lambda _k  = 0,$ верно равенство
\[
\Delta _B^q J_0 \left( \begin{gathered}
  \,\,\,\,\,\alpha  \hfill \\
   - 1/2 \hfill \\
\end{gathered}  \right)f(x) = J_0 \left( \begin{gathered}
  \,\,\,\,\,\alpha  \hfill \\
   - 1/2 \hfill \\
\end{gathered}  \right)\Delta ^q f(x),
\]
где  $\Delta _B  \equiv \sum\limits_{k = 1}^n {B_{\alpha _k  - 1/2}^{x_k }  = } \sum\limits_{k = 1}^n {\left( {\dfrac{{\partial ^2 }}{{\partial x_k^2 }} + \dfrac{{2\alpha _k }}{{x_k }}\dfrac{\partial }
{{\partial x_k }}} \right)}. $
\end{corollary}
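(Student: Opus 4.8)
The plan is to derive Corollary 3 as a direct specialization of Theorem 4, so essentially no new analytic work is needed: one only substitutes $\eta_k = -1/2$ into the identity of Theorem 4 and simplifies the operators that appear.

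The key observation is algebraic. Recall the Bessel operator $B_\gamma^{x_k} = \partial^2/\partial x_k^2 + [(2\gamma+1)/x_k]\,\partial/\partial x_k$. At $\gamma = -1/2$ the coefficient $2\gamma + 1$ vanishes, so the singular first-order term drops out and $B_{-1/2}^{x_k} = \partial^2/\partial x_k^2$. Hence, with $\eta_k = -1/2$ for all $k$, the inner operator on the right-hand side of Theorem 4 becomes $\sum_{k=1}^n B_{\eta_k}^{x_k} = \sum_{k=1}^n \partial^2/\partial x_k^2 = \Delta$, whence $\bigl[\sum_{k=1}^n B_{\eta_k}^{x_k}\bigr]^q = \Delta^q$. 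On the left-hand side $B_{\eta_k+\alpha_k}^{x_k}$ becomes $B_{\alpha_k - 1/2}^{x_k}$. I would also note that the hypotheses of Theorem 4, inherited verbatim in Corollary 3, remain consistent at this value: the weight $x_k^{2\eta_k+1}$ equals $1$, and the limit conditions reduce to the vanishing at $x_k = 0$ of the odd-order derivatives $\partial^{2l+1} f/\partial x_k^{2l+1}$, which are precisely the parity conditions natural to the boundary-value problem under study.

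Substituting these identifications into the identity of Theorem 4 yields the first asserted identity of the corollary immediately. To obtain the second, I would then set $\lambda_k = 0$ for all $k$: the symbol $J_\lambda$ becomes $J_0$, each summand loses its $\lambda_k^2$ term, and the inner operator reduces to $\sum_{k=1}^n B_{\alpha_k - 1/2}^{x_k}$, which is exactly the operator $\Delta_B$ as defined in the statement. This gives $\Delta_B^q J_0(\cdots) f(x) = J_0(\cdots)\Delta^q f(x)$. There is no genuine obstacle here; the only step requiring attention is the bookkeeping at $\gamma = -1/2$ — confirming that the degenerate Bessel operator is genuinely the plain second derivative and that the weighted hypotheses of Theorem 4 specialize without loss.
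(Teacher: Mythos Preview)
Your proposal is correct and matches the paper's approach: Corollary~3 is stated in the paper without a separate proof, as an immediate specialization of Theorem~4 obtained by setting $\eta_k=-1/2$ (so that $B_{-1/2}^{x_k}=\partial^2/\partial x_k^2$ and $\sum_k B_{\eta_k}^{x_k}=\Delta$) and then $\lambda_k=0$. There is nothing to add.
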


Пусть $L^{(y)} $ - не зависящий от  переменной $x = (x_1 ,\,x_2 ,\, \ldots ,\,x_n )$  линейный дифференциальный оператор порядка $l \in N$  по переменной $y = (y_1 ,y_2 ,\, \ldots ,\,y_s ) \in R^s. $

\begin{theorem} \label{th5}
Пусть $\alpha _k  > 0,\,\, \eta _k  \geqslant  - 1/2, \,\, k = \overline {1,n},$ $q \in N;$ $f(x,y) \in C_{x,y}^{2q,lq} (\Omega ^n  \times \Omega ^s ),$ $x_k^{2\eta _k  + 1} \left[ {B_{\eta _k }^{x_k } } \right]^{j + 1} f(x,y)$  интегрируемы в окрестности начала координат и $\mathop {\lim }\limits_{x_k  \to 0} x_k^{2\eta _k  + 1} (\partial/{\partial x_k })\left[ {B_{\eta _k }^{x_k } } \right]^j f(x,y) = 0,$ $j = \overline {0,q - 1},$ $k = \overline {1,\,n}. $   Тогда
\[
\left[ {L^{(y)}  \pm \sum\limits_{k = 1}^n {\left( {B_{\eta _k  + \alpha _k }^{x_k }  + \lambda _k^2 } \right)} } \right]^q J_\lambda ^{(x)} \left( \begin{gathered}
  \alpha  \hfill \\
  \eta  \hfill \\
\end{gathered}  \right)f(x,y) =
\]
\[
=J_\lambda ^{(x)} \left( \begin{gathered}
  \alpha  \hfill \\
  \eta  \hfill \\
\end{gathered}  \right)\left[ {L^{(y)}  \pm \sum\limits_{k = 1}^n {B_{\eta _k }^{x_k } } } \right]^q f(x,y),
\]
где верхние индексы в операторах означают переменные, по которым действуют эти операторы.
\end{theorem}

\begin{proof} Используя биномиальную формулу, получим
\[
\left[ {L^{(y)}  \pm \sum\limits_{k = 1}^n {\left( {B_{\eta _k  + \alpha _k }^{x_k }  + \lambda _k^2 } \right)} } \right]^q J_\lambda ^{(x)} \left( \begin{gathered}
  \alpha  \hfill \\
  \eta  \hfill \\
\end{gathered}  \right)f(x,y) =
\]
\[
 = \sum\limits_{j = 0}^q {C_q^j ( \pm 1)^j \left( {L^{(y)} } \right)^{q - j} \left[ {\sum\limits_{k = 1}^n {\left( {B_{\eta _k  + \alpha _k }^{x_k }  + \lambda _k^2 } \right)} } \right]^j } J_\lambda ^{(x)} \left( \begin{gathered}
  \alpha  \hfill \\
  \eta  \hfill \\
\end{gathered}  \right)f(x,y).
\]

	Далее применяя теорему 4,  имеем
\[
J_\lambda ^{(x)} \left( \begin{gathered}
  \alpha  \hfill \\
  \eta  \hfill \\
\end{gathered}  \right)\sum\limits_{j = 0}^q {\left( {\begin{array}{*{20}c}
   q  \\
   j  \\

 \end{array} } \right)( \pm 1)^j \left( {L^{(y)} } \right)^{q - j} \left[ {\sum\limits_{k = 1}^n {B_{\eta _k }^{x_k } } } \right]^j } f(x,y) =
\]
\[
= J_\lambda ^{(x)} \left( \begin{gathered}
  \alpha  \hfill \\
  \eta  \hfill \\
\end{gathered}  \right)\left[ {L^{(y)}  \pm \sum\limits_{k = 1}^n {B_{\eta _k }^{x_k } } } \right]^q f(x,y).
\]
Теорема 5 доказана.
\end{proof}

\begin{corollary} \label{cor4}
Пусть выполнены условия теоремы 5.  Если $x = (x_1 ,x_2 , \ldots ,x_\omega  ),$ $y = (x_{\omega  + 1} ,x_{\omega  + 2} , \ldots ,x_{\omega  + \sigma } ),\,\,\omega  + \sigma  = n,$ $L^{(y)}  =  - \sum\limits_{k = \omega  + 1}^{\omega  + \sigma } {\left( {B_{\eta _k  + \alpha _k }^{x_k }  + \lambda _k^2 } \right)}, $ тогда
\[
\left[ {\sum\limits_{k = 1}^\omega  {\left( {B_{\eta _k  + \alpha _k }^{x_k }  + \lambda _k^2 } \right)}  - \sum\limits_{k = \omega  + 1}^{\omega  + \sigma } {\left( {B_{\eta _k  + \alpha _k }^{x_k }  + \lambda _k^2 } \right)} } \right]^q J_\lambda  \left( \begin{gathered}
  \alpha  \hfill \\
  \eta  \hfill \\
\end{gathered}  \right)f(x) =
\]
\[
 = J_\lambda  \left( \begin{gathered}
  \alpha  \hfill \\
  \eta  \hfill \\
\end{gathered}  \right)\left[ {\sum\limits_{k = 1}^\omega  {B_{\eta _k }^{x_k } }  - \sum\limits_{k = \omega  + 1}^{\omega  + \sigma } {B_{\eta _k }^{x_k } } } \right]^q f(x),\,\,\,\,\,\,\,\omega  + \sigma  = n,
\]
в частности, если $L^{(y)}  \equiv 0,$ тогда
\[
\left[ {\sum\limits_{k = 1}^n {\left( {B_{\eta _k  + \alpha _k }^{x_k }  + \lambda _k^2 } \right)} } \right]^q J_\lambda  \left( \begin{gathered}
  \alpha  \hfill \\
  \eta  \hfill \\
\end{gathered}  \right)f(x) = J_\lambda  \left( \begin{gathered}
  \alpha  \hfill \\
  \eta  \hfill \\
\end{gathered}  \right)\left[ {\sum\limits_{k = 1}^n {B_{\eta _k }^{x_k } } } \right]^q f(x).
\]
\end{corollary}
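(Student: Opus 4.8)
The plan is to obtain the identity from two applications of Theorem~\ref{th5} after splitting the variables into the blocks $x'=(x_1,\dots,x_\omega)$ and $x''=(x_{\omega+1},\dots,x_{\omega+\sigma})$, $\omega+\sigma=n$, and after factoring the $n$-dimensional transform as $J_\lambda\binom{\alpha}{\eta}=J'J''$, where $J'$ is the operator \eqref{moek4} built from the one-dimensional factors $J_{\lambda_k}^{x_k}(\eta_k,\alpha_k)$ with $k=\overline{1,\omega}$ and $J''$ the one built from $k=\overline{\omega+1,\omega+\sigma}$. Since $J'$ and $J''$ integrate in disjoint groups of variables, they commute with each other, and each of them commutes with every $B_{\eta_k}^{x_k}$ whose index lies in the complementary block.

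First I would move $J''$ to the left across the $q$-th power of $\sum_{k=1}^{\omega}B_{\eta_k}^{x_k}-\sum_{k=\omega+1}^{\omega+\sigma}B_{\eta_k}^{x_k}$. This is Theorem~\ref{th5} with the block $x''$ in the role of the transform variables, the block $x'$ in the role of $y$, the external operator $L^{(y)}=\sum_{k=1}^{\omega}B_{\eta_k}^{x_k}$ (of order two, independent of $x''$), and the minus sign; it gives
\[
J''\Bigl[\sum_{k=1}^{\omega}B_{\eta_k}^{x_k}-\sum_{k=\omega+1}^{\omega+\sigma}B_{\eta_k}^{x_k}\Bigr]^{q}f=\Bigl[\sum_{k=1}^{\omega}B_{\eta_k}^{x_k}-\sum_{k=\omega+1}^{\omega+\sigma}\bigl(B_{\eta_k+\alpha_k}^{x_k}+\lambda_k^{2}\bigr)\Bigr]^{q}J''f .
\]
Then I would move $J'$ across the operator now standing on the right; this is again Theorem~\ref{th5}, with the block $x'$ as the transform variables, $x''$ as $y$, the plus sign, and exactly the external operator named in the statement, $L^{(y)}=-\sum_{k=\omega+1}^{\omega+\sigma}(B_{\eta_k+\alpha_k}^{x_k}+\lambda_k^{2})$. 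Composing the two equalities and using $J'J''=J_\lambda\binom{\alpha}{\eta}$ yields the asserted formula. In the case $L^{(y)}\equiv0$ one has $\sigma=0$: then $J''$ and the first step are absent and the remaining identity is word for word the conclusion of Theorem~\ref{th4}.

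The binomial expansions of $[L^{(y)}\pm(\cdot)]^{q}$ and the accompanying sign bookkeeping are routine; the point needing care is that the hypotheses of Theorem~\ref{th5} are available in both applications. For the first one they are precisely those assumed for $f$. For the second one they must hold for $g:=J''f$ with $x'$ as transform variables and $x''$ as $y$: the continuity of $x_k^{2\eta_k+1}[B_{\eta_k}^{x_k}]^{j+1}g$ near $x_k=0$ and the vanishing of $\lim_{x_k\to0}x_k^{2\eta_k+1}(\partial/\partial x_k)[B_{\eta_k}^{x_k}]^{j}g$ for $j=\overline{0,q-1}$, $k=\overline{1,\omega}$, are inherited from the corresponding hypotheses on $f$ because $J''$ leaves $x_1,\dots,x_\omega$ untouched, so one may differentiate under the integral sign; the classical smoothness of $g$ in the block $x''$ and the applicability of $L^{(y)}$ to it follow from the mapping properties of $J''$ already established in the course of proving Theorems~\ref{th1}--\ref{th4}, together with the hypotheses imposed on $f$ in the block $x''$. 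Granting this, the two invocations of Theorem~\ref{th5} are legitimate and the corollary follows.
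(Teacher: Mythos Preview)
The paper does not spell out a proof of this corollary; it is simply stated after Theorem~\ref{th5} with the tacit understanding that it follows from that theorem. Your proposal supplies exactly the missing argument, and it is correct. In particular, you are right that a \emph{single} invocation of Theorem~\ref{th5} with the announced choice of $L^{(y)}$ does not suffice: if one takes the transform variables to be $x'=(x_1,\dots,x_\omega)$ and keeps $L^{(y)}=-\sum_{k=\omega+1}^{n}(B_{\eta_k+\alpha_k}^{x_k}+\lambda_k^2)$ fixed, the right-hand side would still carry $B_{\eta_k+\alpha_k}^{x_k}+\lambda_k^2$ (not $B_{\eta_k}^{x_k}$) in the second block, whereas the corollary is formulated for the full $n$-dimensional operator $J_\lambda\binom{\alpha}{\eta}$. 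Your factorisation $J_\lambda=J'J''$ and the two-step passage (first moving $J''$ across via Theorem~\ref{th5} with $L^{(x')}=\sum_{k=1}^\omega B_{\eta_k}^{x_k}$, then moving $J'$ across with the stated $L^{(y)}$) is precisely what is needed; the verification that $g=J''f$ inherits the hypotheses of Theorem~\ref{th5} in the $x'$-block by differentiation under the integral sign is the right justification. The special case $L^{(y)}\equiv0$ is, as you note, literally Theorem~\ref{th4}.

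So your route is not really different from the paper's intent---it is the honest execution of what ``corollary of Theorem~\ref{th5}'' must mean here---but it is more explicit than what the paper writes, and it closes a small gap that the paper's one-line attribution leaves open.
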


Пусть $[D_{\eta _k }^{x_k } ]^0  = E,$ $D_{\eta _k }^{x_k }  \equiv x_k^{ - 2\eta _k } \left( {\dfrac{1}
{{x_k }}\dfrac{\partial }{{\partial x_k }}} \right)x_k^{2\eta _k }, $  $[D_{\eta _k }^{x_k } ]^{m_k }  = [D_{\eta _k }^{x_k } ]^{m_k  - 1} D_{\eta _k }^{x_k }  = $ $ = D_{\eta _k }^{x_k } D_{\eta _k }^{x_k } ...D_{\eta _k }^{x_k } $  - $m_k $ - я степень оператора $D_{\eta _k }^{x_k }, $ которая представима в виде $[D_{\eta _k }^{x_k } ]^{m_k }  = x_k^{ - 2\eta _k } \left( {\dfrac{1}{{x_k }}\dfrac{\partial }{{\partial x_k }}} \right)^{m_k } x_k^{2\eta _k }, $ $m_k $ -неотрицательные целые числа, $k = \overline {1,n}. $

\begin{theorem} \label{th6}	
Если $\alpha _k  > 0,\,\,\eta _k  \geqslant  - (1/2),\,\,k = \overline {1,n}, $ $f(x) \in C^{m_0 } (\Omega ^n ),$ функции $x_{x_k }^{2\eta _k  + 1} [D_{\eta _k }^{x_k } ]^{\,l_k  + 1} f(x)$  - интегрируемы при $x_k  \to 0$ и  $\mathop {\lim }\limits_{x_k  \to 0} x_k^{2\eta _k } [D_{\eta _k }^{x_k } ]^{\,l_k } f(x) = 0,\,\,\,l_k  = \overline {0,m_k  - 1} ,\,\,\,k = \overline {1,n}, $  то
\begin{equation} \label{koek10}
[D_{\eta _k  + \alpha _k }^{x_k } ]^{m_k } J_\lambda  \left( \begin{gathered}
  \alpha  \hfill \\
  \eta  \hfill \\
\end{gathered}  \right)f(x) = J_\lambda  \left( \begin{gathered}
  \alpha  \hfill \\
  \eta  \hfill \\
\end{gathered}  \right)[D_{\eta _k }^{x_k } ]^{m_k } f(x),\,\,\,k = \overline {1,n},
\end{equation}
где  $m_0  = \max \{ m_1 ,m_2 , \ldots ,m_n \}$
\end{theorem}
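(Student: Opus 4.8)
The plan is to follow the scheme used for Theorem~\ref{th3}: reduce \eqref{koek10} to one coordinate and run an induction on $m_k$, the base case now being a \emph{first-order} intertwining identity for $D$ rather than the one for $B$ furnished by Theorem~\ref{th1}. (Note that \eqref{koek10} requires only $f\in C^{m_0}$, half the smoothness demanded by Theorem~\ref{th3}, so it cannot simply be deduced from the latter.) For fixed $k$ the factors $J_{\lambda_j}^{x_j}(\eta_j,\alpha_j)$ with $j\neq k$ act in variables other than $x_k$ and, under the stated smoothness, commute past $[D_{\eta_k+\alpha_k}^{x_k}]^{m_k}$ and $[D_{\eta_k}^{x_k}]^{m_k}$ (differentiation under the integral sign), so \eqref{koek10} is equivalent to the one-variable assertion $[D_{\eta+\alpha}^{x}]^{m}J_\lambda^{x}(\eta,\alpha)g=J_\lambda^{x}(\eta,\alpha)[D_{\eta}^{x}]^{m}g$, with $J_\lambda^{x}(\eta,\alpha)$ the one-dimensional operator from~\eqref{moek4}.

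The crux is the base case $m=1$, i.e. $D_{\eta+\alpha}^{x}J_\lambda^{x}(\eta,\alpha)g=J_\lambda^{x}(\eta,\alpha)D_{\eta}^{x}g$, which I would prove by direct computation on~\eqref{moek4}. The change of variable $t^{2}=s$, $\xi=x^{2}$ turns $J_\lambda^{x}(\eta,\alpha)g$ into $\dfrac{\xi^{-\eta-\alpha}}{\Gamma(\alpha)}\displaystyle\int_{0}^{\xi}s^{\eta}(\xi-s)^{\alpha-1}\bar J_{\alpha-1}(\lambda\sqrt{\xi-s})\,\tilde g(s)\,ds$ with $\tilde g(s)=g(\sqrt s)$, a fractional integral of Erd\'{e}lyi--Kober type, while $\dfrac1x\dfrac{\partial}{\partial x}=2\dfrac{\partial}{\partial\xi}$ and $D_{\mu}^{x}=2\xi^{-\mu}\dfrac{\partial}{\partial\xi}\xi^{\mu}$. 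Computing $D_{\eta+\alpha}^{x}J_\lambda^{x}(\eta,\alpha)g$ by differentiation under the integral sign and using the kernel recurrence $\dfrac{d}{dw}\bigl[w^{\alpha-1}\bar J_{\alpha-1}(\lambda\sqrt w)\bigr]=(\alpha-1)\,w^{\alpha-2}\bar J_{\alpha-2}(\lambda\sqrt w)$ (a restatement of $\frac{d}{dz}(z^{\nu}J_{\nu}(z))=z^{\nu}J_{\nu-1}(z)$) one arrives at $\dfrac{4\,\xi^{-\eta-\alpha}}{\Gamma(\alpha-1)}\displaystyle\int_{0}^{\xi}s^{\eta}(\xi-s)^{\alpha-2}\bar J_{\alpha-2}(\lambda\sqrt{\xi-s})\,\tilde g(s)\,ds$. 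For the other side one writes $s^{\eta}\,\widetilde{D_{\eta}^{x}g}(s)=2\dfrac{d}{ds}\bigl(s^{\eta}\tilde g(s)\bigr)$ and integrates by parts once in $s$: the boundary term at $s=\xi$ vanishes because the kernel carries a positive power of $\xi-s$, the one at $s=0$ vanishes by the hypothesis $\lim_{x\to0}x^{2\eta}g(x)=0$, and absolute convergence near $s=0$ of the integral produced is secured by the hypothesis that $x^{2\eta+1}D_{\eta}^{x}g(x)$ remain bounded as $x\to0$; the result coincides with the expression just obtained. (For $\lambda_k=0$ the same computation works verbatim with $\bar J$ replaced by $1$.)

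The induction step then copies the argument of Theorem~\ref{th3}: assuming \eqref{koek10} for $m_k=l_k$, one has $[D_{\eta+\alpha}^{x}]^{l_k+1}J_\lambda^{x}g=D_{\eta+\alpha}^{x}\bigl([D_{\eta+\alpha}^{x}]^{l_k}J_\lambda^{x}g\bigr)=D_{\eta+\alpha}^{x}\bigl(J_\lambda^{x}[D_{\eta}^{x}]^{l_k}g\bigr)$ by the inductive hypothesis, and the base case applied to the function $[D_{\eta}^{x}]^{l_k}g$ --- whose two conditions $\lim_{x\to0}x^{2\eta}[D_{\eta}^{x}]^{l_k}g=0$ and boundedness of $x^{2\eta+1}D_{\eta}^{x}[D_{\eta}^{x}]^{l_k}g$ are exactly the $l=l_k$ hypotheses of the theorem --- turns it into $J_\lambda^{x}[D_{\eta}^{x}]^{l_k+1}g$. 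Restoring the other $n-1$ variables finishes the proof.

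I expect the base case to be the only real obstacle, and inside it the delicate point to be the legitimacy of differentiating under the integral sign and of the integration by parts when $0<\alpha\le1$, where $(\xi-s)^{\alpha-1}$ and especially $(\xi-s)^{\alpha-2}$ are singular at $s=\xi$. The clean way around this is to reduce to the case $\alpha>1$ by the standard regularization of Erd\'{e}lyi--Kober operators (cf.~\cite{skm}) --- representing the operator with small $\alpha$ as one with a larger parameter composed with a first-order differential operator; the remaining points (commuting the $j\neq k$ factors past $D^{x_k}$, the multi-index bookkeeping) are routine.
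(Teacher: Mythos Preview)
Your argument is correct and follows essentially the same route as the paper: fix $k$, induct on $m_k$, and carry out the induction step exactly as you do (it is identical to the paper's). The only difference concerns the base case $m_k=1$: the paper does not compute it but quotes the identity~\eqref{koek11} from \cite{Kar1,Kar2}, whereas you supply a self-contained derivation via the Bessel-kernel recurrence and an integration by parts (with the appropriate caveat for $0<\alpha_k\leq 1$).
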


\begin{proof} Эта теорема также доказывается с применением метода математической индукции по $m_k ,\,\,k = \overline {1,n}. $ Произвольно фиксируем $k \in N.$ Доказательство формулы (10) при  $m_k  = 1,$ $k = \overline {1,n} $  приведены в работах~\cite{Kar1}, \cite{Kar2}, согласно которой имеем
\begin{equation} \label{koek11}
[D_{\eta _k  + \alpha _k }^{x_k } ]J_\lambda  \left( \begin{gathered}
  \alpha  \hfill \\
  \eta  \hfill \\
\end{gathered}  \right)f(x) = J_\lambda  \left( \begin{gathered}
  \alpha  \hfill \\
  \eta  \hfill \\
\end{gathered}  \right)[D_{\eta _k }^{x_k } ]f(x),\,\,\,k = \overline {1,n}.
\end{equation}

Предположим, что равенство (10) имеет место при $m_k  = l_k $ и докажем, что оно справедливо при $m_k  = l_k  + 1.$
\begin{equation} \label{koek12}
[D_{\eta _k  + \alpha _k }^{x_k } ]^{l_k  + 1} J_\lambda  \left( \begin{gathered}
  \alpha  \hfill \\
  \eta  \hfill \\
\end{gathered}  \right)f(x) = [D_{\eta _k  + \alpha _k }^{x_k } ][D_{\eta _k  + \alpha _k }^{x_k } ]^{l_k } J_\lambda  \left( \begin{gathered}
  \alpha  \hfill \\
  \eta  \hfill \\
\end{gathered}  \right)f(x).
\end{equation}

По предположению индукции при выполнении условий теоремы 6, имеем
\[
[D_{\eta _k  + \alpha _k }^{x_k } ]^{l_k } J_\lambda  \left( \begin{gathered}
  \alpha  \hfill \\
  \eta  \hfill \\
\end{gathered}  \right)f(x) = J_\lambda  \left( \begin{gathered}
  \alpha  \hfill \\
  \eta  \hfill \\
\end{gathered}  \right)[D_{\eta _k }^{x_k } ]^{l_k } f(x).
\]

Тогда равенство (12) примет вид
\[
[D_{\eta _k  + \alpha _k }^{x_k } ]^{l_k  + 1} J_\lambda  \left( \begin{gathered}
  \alpha  \hfill \\
  \eta  \hfill \\
\end{gathered}  \right)f(x) = [D_{\eta _k  + \alpha _k }^{x_k } ]J_\lambda  \left( \begin{gathered}
  \alpha  \hfill \\
  \eta  \hfill \\
\end{gathered}  \right)[D_{\eta _k }^{x_k } ]^{l_k } f(x).
\]

Далее, при выполнении условий $\mathop {\lim }\limits_{x_k  \to 0} x_k^{2\eta _k } [D_{\eta _k }^{x_k } ]^{\,l_k } f(x) = 0,$ применяя формулу (11) к функциям $[D_{\eta _k }^{x_k } ]^{l_k } f(x),$  получим справедливость формулы (10).
\end{proof}

\begin{corollary} \label{cor5}  Пусть выполнены условия теоремы 6, тогда
\[
\prod\limits_{k = 1}^n {[D_{\eta _k  + \alpha _k }^{x_k } ]^{m_k } J_\lambda  \left( \begin{gathered}
  \alpha  \hfill \\
  \eta  \hfill \\
\end{gathered}  \right)f(x)}  = J_\lambda  \left( \begin{gathered}
  \alpha  \hfill \\
  \eta  \hfill \\
\end{gathered}  \right)\prod\limits_{k = 1}^n {[D_{\eta _k }^{x_k } ]^{m_k } f(x)}.
\]
\end{corollary}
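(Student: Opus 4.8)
The approach is to obtain the multidimensional intertwining relation by applying the one-dimensional identity (10) of Theorem 6 successively in the variables $x_1, x_2, \dots, x_n$, using throughout that operators acting in distinct variables commute. Recall from (4) that $J_\lambda\binom{\alpha}{\eta}$ factors as the composition of the one-dimensional transforms $J^{x_k}_{\lambda_k}(\eta_k,\alpha_k)$, $k=\overline{1,n}$, each acting only in $x_k$; likewise $[D^{x_k}_{\eta_k}]^{m_k} = x_k^{-2\eta_k}(x_k^{-1}\partial_{x_k})^{m_k} x_k^{2\eta_k}$ and $[D^{x_k}_{\eta_k+\alpha_k}]^{m_k}$ involve only the variable $x_k$. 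Hence any two operators among $\{J^{x_j}_{\lambda_j},\,[D^{x_j}_{\eta_j}]^{m_j},\,[D^{x_j}_{\eta_j+\alpha_j}]^{m_j}\}_{j}$ carrying different indices commute, the interchange of differentiation in $x_k$ with the iterated integration in $x_j$ being justified by the assumed continuity of the relevant mixed derivatives.

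First I would bring, in $\prod_{k=1}^n [D^{x_k}_{\eta_k+\alpha_k}]^{m_k} J_\lambda\binom{\alpha}{\eta} f(x)$, the factor $[D^{x_1}_{\eta_1+\alpha_1}]^{m_1}$ next to the transform and apply Theorem 6 in the variable $x_1$, turning it into $J_\lambda\binom{\alpha}{\eta}$ acting on $[D^{x_1}_{\eta_1}]^{m_1} f(x)$; the remaining shifted factors $[D^{x_j}_{\eta_j+\alpha_j}]^{m_j}$, $j\ge 2$, sit outside and play no role at this step. I would then repeat this for $k=2,\dots,n$: at the $k$-th step apply Theorem 6 in the variable $x_k$ to the function $g_{k-1} := [D^{x_{k-1}}_{\eta_{k-1}}]^{m_{k-1}}\cdots[D^{x_1}_{\eta_1}]^{m_1} f(x)$, replacing $[D^{x_k}_{\eta_k+\alpha_k}]^{m_k}$ by the transform applied to $[D^{x_k}_{\eta_k}]^{m_k} g_{k-1}=g_k$; the previously produced factors $[D^{x_j}_{\eta_j}]^{m_j}$, $j<k$, act in variables disjoint from $x_k$, so they commute past $J_\lambda\binom{\alpha}{\eta}$ and remain in place. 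After $n$ such steps the left-hand side has been transformed into $J_\lambda\binom{\alpha}{\eta}\prod_{k=1}^n [D^{x_k}_{\eta_k}]^{m_k} f(x)$, which is the assertion.

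The only real point requiring care — and thus the main obstacle — is to verify that at each step the intermediate function $g_{k-1}$ still satisfies the hypotheses of Theorem 6 in the variable $x_k$: that $x_k^{2\eta_k+1}[D^{x_k}_{\eta_k}]^{l_k+1} g_{k-1}$ is bounded as $x_k\to 0$ and that $\lim_{x_k\to 0} x_k^{2\eta_k}[D^{x_k}_{\eta_k}]^{l_k} g_{k-1} = 0$ for $l_k = \overline{0,m_k-1}$, with $g_{k-1}$ of the required smoothness class in $x_k$. Since for $j<k$ the operator $[D^{x_j}_{\eta_j}]^{m_j}$ differentiates only in $x_j$, it commutes with $[D^{x_k}_{\eta_k}]^{l_k}$ and with multiplication by powers of $x_k$; hence these conditions on $g_{k-1}$ follow by applying $[D^{x_{k-1}}_{\eta_{k-1}}]^{m_{k-1}}\cdots[D^{x_1}_{\eta_1}]^{m_1}$ to the conditions assumed on $f$ in Theorem 6, provided $f$ carries enough smoothness for all these derivatives to exist and be continuous — it is cleanest to take $f\in C^{|m|}(\Omega^n)$ so that every $g_{k-1}\in C^{m_0}(\Omega^n)$, or else to track only the order of smoothness in $x_k$ actually used. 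With this bookkeeping settled the induction closes and Corollary 5 is proved; carried out with all $n$ factors present at once, the same computation exhibits $\prod_{k=1}^n [D^{x_k}_{\eta_k+\alpha_k}]^{m_k}$ as an intertwining operator between $J_\lambda\binom{\alpha}{\eta}$ and $\prod_{k=1}^n [D^{x_k}_{\eta_k}]^{m_k}$.
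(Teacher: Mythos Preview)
The paper states Corollary~5 without proof, treating it as an immediate consequence of Theorem~6; your argument is precisely the natural elaboration of that implication---apply (10) successively in each coordinate and use that operators acting in distinct variables commute. Your observation that the product $\prod_{k=1}^n [D^{x_k}_{\eta_k}]^{m_k}$ really needs $f\in C^{|m|}(\Omega^n)$ rather than merely $C^{m_0}(\Omega^n)$ is well taken and sharper than what the paper records; compare the analogous jump from $C^{2m_0}$ to $C^{2|m|}$ in passing from Theorem~3 to the product identity~(9) in Corollary~2.
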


\begin{theorem} \label{th7}	
Пусть $0 < \alpha _k  < 1,\,\,\,\eta _k  \geqslant  - 1/2,\,\,k = \overline {1,n},$ $p \in N;$ $g(x) \in C^{2p} (\Omega ^n );$ $\dfrac{\partial }{{\partial x_k }}\left[ {B_{\eta _k + \alpha_k }^{x_k } } \right]^l g(x)$   интегрируемы в нуле и $\mathop {\lim }\limits_{x_k  \to 0} x_k^{2(\eta _k  + \alpha _k ) + 1} \dfrac{\partial }
{{\partial x_k }}\left[ {B_{\eta _k  + \alpha _k }^{x_k } } \right]^l g(x) = 0,$ $l = \overline {0,p - 1},$ $k = \overline {1,\,n}. $
	Тогда имеет место равенство
\[
\left[ {B_{\eta _k }^{x_k }  - \lambda _k^2 } \right]^p J_\lambda ^{ - 1} \left( \begin{gathered}
  \alpha  \hfill \\
  \eta  \hfill \\
\end{gathered}  \right)g(x) = J_\lambda ^{ - 1} \left( \begin{gathered}
  \alpha  \hfill \\
  \eta  \hfill \\
\end{gathered}  \right)\left[ {B_{\eta _k  + \alpha _k }^{x_k } } \right]^p g(x),\,\,k = \overline {1,n},
\]
или
\[
\left[ {B_{\eta _k }^{x_k } } \right]^p J_\lambda ^{ - 1} \left( \begin{gathered}
  \alpha  \hfill \\
  \eta  \hfill \\
\end{gathered}  \right)g(x) = J_\lambda ^{ - 1} \left( \begin{gathered}
  \alpha  \hfill \\
  \eta  \hfill \\
\end{gathered}  \right)\left[ {B_{\eta _k  + \alpha _k }^{x_k }  + \lambda _k^2 } \right]^p g(x),\,\,\,k = \overline {1,n},
\]
в частности, если $\lambda _k  = 0,$ тогда
\[
\left[ {B_{\eta _k }^{x_k } } \right]^p J_0^{ - 1} \left( \begin{gathered}
  \alpha  \hfill \\
  \eta  \hfill \\
\end{gathered}  \right)g(x) = J_0^{ - 1} \left( \begin{gathered}
  \alpha  \hfill \\
  \eta  \hfill \\
\end{gathered}  \right)\left[ {B_{\eta _k  + \alpha _k }^{x_k } } \right]^p g(x),\,\,\,k = \overline {1,n}.
\]
\end{theorem}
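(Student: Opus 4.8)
The plan is to mirror the proof of Theorem~\ref{th3}: first settle the case $p=1$, then run an induction on $p$, after which the other two formulas follow by elementary algebra. Throughout write $J_\lambda$ and $J_\lambda^{-1}$ for $J_\lambda\binom{\alpha}{\eta}$ and $J_\lambda^{-1}\binom{\alpha}{\eta}$. The heart of the matter is the single operator identity
\[
B_{\eta_k}^{x_k}J_\lambda^{-1}g=J_\lambda^{-1}\bigl(B_{\eta_k+\alpha_k}^{x_k}+\lambda_k^2\bigr)g,\qquad k=\overline{1,n},
\]
valid for every $g$ meeting the hypotheses of the theorem with $p=1$; equivalently, since $\lambda_k^2$ is a scalar and $J_\lambda^{-1}$ linear, $\bigl(B_{\eta_k}^{x_k}-\lambda_k^2\bigr)J_\lambda^{-1}g=J_\lambda^{-1}B_{\eta_k+\alpha_k}^{x_k}g$. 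I would get this either by differentiating the explicit representation~\eqref{oek5} of $J_\lambda^{-1}$ twice in $x_k$ under the integral sign --- the conditions imposed on $\tfrac{\partial}{\partial x_k}\bigl[B_{\eta_k+\alpha_k}^{x_k}\bigr]^{l}g$ are precisely the boundary terms that vanish after the integrations by parts --- or, more briefly, by the substitution $h=J_\lambda^{-1}g$, $g=J_\lambda h$ (using that \eqref{oek5} is a genuine two-sided inverse of~\eqref{moek4}): Theorem~\ref{th1} applied to $h$ gives $\bigl(B_{\eta_k+\alpha_k}^{x_k}+\lambda_k^2\bigr)g=J_\lambda B_{\eta_k}^{x_k}h$, and applying $J_\lambda^{-1}$ to both sides produces the identity.

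With the $p=1$ case established, the induction on $p$ repeats the argument in the proof of Theorem~\ref{th3}. Assuming inductively that $\bigl[B_{\eta_k}^{x_k}\bigr]^{p-1}J_\lambda^{-1}g=J_\lambda^{-1}\bigl[B_{\eta_k+\alpha_k}^{x_k}+\lambda_k^2\bigr]^{p-1}g$, I rewrite the left side of the desired formula as $B_{\eta_k}^{x_k}\bigl(J_\lambda^{-1}\bigl[B_{\eta_k+\alpha_k}^{x_k}+\lambda_k^2\bigr]^{p-1}g\bigr)$ and apply the $p=1$ identity to the function $\bigl[B_{\eta_k+\alpha_k}^{x_k}+\lambda_k^2\bigr]^{p-1}g$; that function satisfies the $p=1$ hypotheses because, after expanding $\bigl[B_{\eta_k+\alpha_k}^{x_k}+\lambda_k^2\bigr]^{p-1}=\sum_{j}C_{p-1}^{j}\lambda_k^{2(p-1-j)}\bigl[B_{\eta_k+\alpha_k}^{x_k}\bigr]^{j}$, the hypotheses of the theorem guarantee the boundedness of $\tfrac{\partial}{\partial x_k}\bigl[B_{\eta_k+\alpha_k}^{x_k}\bigr]^{j}g$ and the vanishing of $x_k^{2(\eta_k+\alpha_k)+1}\tfrac{\partial}{\partial x_k}\bigl[B_{\eta_k+\alpha_k}^{x_k}\bigr]^{j}g$ for all $j\le p-1$. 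This yields the second displayed identity; the first then follows from it by Newton's binomial (as in the proof of Theorem~\ref{th5}; all operators act in $x_k$ and commute with $\lambda_k^2$):
\[
\bigl[B_{\eta_k}^{x_k}-\lambda_k^2\bigr]^{p}J_\lambda^{-1}g=J_\lambda^{-1}\sum_{j=0}^{p}C_p^j(-\lambda_k^2)^{p-j}\bigl[B_{\eta_k+\alpha_k}^{x_k}+\lambda_k^2\bigr]^{j}g=J_\lambda^{-1}\bigl[B_{\eta_k+\alpha_k}^{x_k}\bigr]^{p}g,
\]
and putting $\lambda_k=0$, so that $J_0^{-1}$ is given by~\eqref{ooek7}, collapses both identities to the third.

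I expect the real work --- and the main obstacle --- to be the $p=1$ step, more precisely the regularity bookkeeping around it. If one proceeds by substitution, one must verify that $h=J_\lambda^{-1}g$ is regular enough, and has the right behaviour as $x_k\to0$, for Theorem~\ref{th1} to apply; this is exactly where $0<\alpha_k<1$ is essential --- then $m_k=[\alpha_k]+1=1$ in~\eqref{oek5}, so $h$ is obtained from a Bessel-type fractional integral of $g$ of order $1-\alpha_k$ by a single application of $\tfrac{1}{x_k}\tfrac{\partial}{\partial x_k}$, and the first-derivative conditions imposed on $g$ are calibrated precisely so that $x_k^{2\eta_k+1}B_{\eta_k}^{x_k}h$ is bounded and $x_k^{2\eta_k+1}h_{x_k}\to0$ as $x_k\to0$. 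One must also check that $\bigl[B_{\eta_k}^{x_k}\bigr]^{p-1}J_\lambda^{-1}g$ stays in the class on which $J_\lambda J_\lambda^{-1}=E$, so that applying $J_\lambda^{-1}$ at each step of the induction is legitimate. Should the substitution be awkward to justify, the direct route via~\eqref{oek5} is cleaner, since one never leaves the $g$-side: the hypotheses on $g$ are then literally the terms that must drop for $B_{\eta_k}^{x_k}$ to commute past the Buschman--Erdelyi kernel in~\eqref{oek5}.
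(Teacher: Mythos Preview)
Your proposal is correct and matches the paper's own approach: the paper's entire proof of Theorem~\ref{th7} is the single sentence that it ``is analogous to the proof of Theorem~\ref{th3},'' i.e.\ base case plus induction on $p$. You have simply filled in that analogy with more care --- in particular your discussion of the $p=1$ step, the role of $0<\alpha_k<1$ forcing $m_k=1$ in~\eqref{oek5}, and the binomial reduction between the first two identities --- all of which the paper leaves implicit.
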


Доказательство теоремы аналогично доказательству теоремы 3.

\begin{corollary} \label{cor6}
 Пусть выполнены условия теоремы 7, тогда
\[
\left[ {\sum\limits_{k = 1}^n {\left( {B_{\eta _k }^{x_k }  - \lambda _k^2 } \right)} } \right]^p J_\lambda ^{ - 1} \left( \begin{gathered}
  \alpha  \hfill \\
  \eta  \hfill \\
\end{gathered}  \right)g(x) = J_\lambda ^{ - 1} \left( \begin{gathered}
  \alpha  \hfill \\
  \eta  \hfill \\
\end{gathered}  \right)\left[ {\sum\limits_{k = 1}^n {B_{\eta _k  + \alpha _k }^{x_k } } } \right]^p g(x),
\]
или
\[
\left[ {\sum\limits_{k = 1}^n {B_{\eta _k }^{x_k } } } \right]^p J_\lambda ^{ - 1} \left( \begin{gathered}
  \alpha  \hfill \\
  \eta  \hfill \\
\end{gathered}  \right)g(x) = J_\lambda ^{ - 1} \left( \begin{gathered}
  \alpha  \hfill \\
  \eta  \hfill \\
\end{gathered}  \right)\left[ {\sum\limits_{k = 1}^n {\left( {B_{\eta _k  + \alpha _k }^{x_k }  + \lambda _k^2 } \right)} } \right]^p g(x).
\]
\end{corollary}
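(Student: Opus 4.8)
The plan is to deduce Corollary~\ref{cor6} from Theorem~\ref{th7} by precisely the device used to pass from Theorem~\ref{th3} to Corollary~\ref{cor2} and in the proof of Theorem~\ref{th4}: multinomial expansion combined with the fact that Bessel-type operators acting in distinct variables commute. Write $A_k:=B_{\eta_k}^{x_k}-\lambda_k^2$ and $\widetilde A_k:=B_{\eta_k+\alpha_k}^{x_k}$ for $k=\overline{1,n}$. Since $A_k$ involves only the variable $x_k$, the families $\{A_1,\dots,A_n\}$ and $\{\widetilde A_1,\dots,\widetilde A_n\}$ are each pairwise commuting, so the multinomial theorem yields
\[
\Bigl[\sum_{k=1}^n A_k\Bigr]^p=\sum_{|m|=p}\frac{p!}{m!}\prod_{k=1}^n A_k^{m_k},\qquad
\Bigl[\sum_{k=1}^n \widetilde A_k\Bigr]^p=\sum_{|m|=p}\frac{p!}{m!}\prod_{k=1}^n \widetilde A_k^{m_k},
\]
where $m!=m_1!\cdots m_n!$ and $|m|=m_1+\dots+m_n$.

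The first step is to upgrade Theorem~\ref{th7} to its \emph{product} form: for every multi-index $m$ with $|m|\le p$,
\[
\prod_{k=1}^n A_k^{m_k}\,J_\lambda^{-1}\binom{\alpha}{\eta}g(x)=J_\lambda^{-1}\binom{\alpha}{\eta}\prod_{k=1}^n\widetilde A_k^{m_k}g(x),
\]
together with the companion identity in which $A_k,\widetilde A_k$ are replaced by $B_{\eta_k}^{x_k}$ and $B_{\eta_k+\alpha_k}^{x_k}+\lambda_k^2$. I would establish this by applying the single-variable identity of Theorem~\ref{th7} successively for $k=1,2,\dots,n$: after the operators $A_1^{m_1},\dots,A_{k-1}^{m_{k-1}}$ have been moved through $J_\lambda^{-1}$, one is left with $J_\lambda^{-1}$ applied to $h_k:=\prod_{j<k}\widetilde A_j^{m_j}g$; since each $\widetilde A_j$ with $j<k$ commutes with $\widetilde A_k$ and with $\partial/\partial x_k$, the function $h_k$ again satisfies the hypotheses of Theorem~\ref{th7} relative to the variable $x_k$, so Theorem~\ref{th7} may be reapplied to $h_k$. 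This is the inverse-operator analogue of formula~(9), which is obtained from Theorem~\ref{th3} in exactly the same manner.

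Granted the product form, the corollary follows at once: apply $\bigl[\sum_k A_k\bigr]^p$ to $J_\lambda^{-1}\binom{\alpha}{\eta}g$, expand it by the first multinomial identity above, push each $\prod_k A_k^{m_k}$ through $J_\lambda^{-1}$ by the product form, use linearity of $J_\lambda^{-1}$ to draw the finite sum $\sum_{|m|=p}\frac{p!}{m!}(\cdot)$ back inside, and finally recognise, via the second multinomial identity, that $\sum_{|m|=p}\frac{p!}{m!}\prod_k\widetilde A_k^{m_k}g=\bigl[\sum_k B_{\eta_k+\alpha_k}^{x_k}\bigr]^p g$. This gives the first displayed equality of Corollary~\ref{cor6}; the second is obtained identically from the companion product form (and when every $\lambda_k=0$ the two equalities coincide).

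The one point demanding care is the verification, at each stage of the iteration, that the intermediate function $h_k$ inherits the regularity hypotheses of Theorem~\ref{th7}, namely membership in $C^{2p}(\Omega^n)$, continuity of $(\partial/\partial x_k)[\widetilde A_k]^l h_k$ at $x_k=0$, and the weighted limits $\lim_{x_k\to0}x_k^{2(\eta_k+\alpha_k)+1}(\partial/\partial x_k)[\widetilde A_k]^l h_k=0$ for $l=\overline{0,p-1}$, from the corresponding hypotheses on $g$. Here one uses that those conditions are assumed simultaneously for all $k=\overline{1,n}$ and that the operators $\widetilde A_j$ with $j\ne k$ commute with $\widetilde A_k$ and with $\partial/\partial x_k$, so that $[\widetilde A_k]^l h_k=\prod_{j<k}\widetilde A_j^{m_j}[\widetilde A_k]^l g$ and the required continuity and vanishing at $x_k=0$ are preserved under the $x_j$-differentiations. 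Granting this, as the analogous corollaries earlier in the paper implicitly do, what remains is the routine multinomial bookkeeping already exhibited in the proof of Theorem~\ref{th4}.
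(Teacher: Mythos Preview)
Your proposal is correct and follows precisely the route the paper intends: Corollary~\ref{cor6} is stated without proof, but it stands to Theorem~\ref{th7} exactly as Theorem~\ref{th4} (via the product identity~(9) of Corollary~\ref{cor2}) stands to Theorem~\ref{th3}, and you have reproduced that template---first iterate the single-variable identity of Theorem~\ref{th7} to a product form, then apply the multinomial expansion. Your explicit attention to the inheritance of the regularity hypotheses by the intermediate functions $h_k$ is more careful than the paper itself, which leaves this implicit throughout.
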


	Если выполнены условия $\mathop {\lim }\limits_{x_k  \to 0} x_k^{2\alpha _k } \dfrac{\partial }
{{\partial x_k }}\left[ {B_{\alpha _k  - (1/2)}^{x_k } } \right]^l g(x) = 0,$ $l = \overline {0,p - 1},$ $k = \overline {1,\,n}, $ то из последнего равенства при $\lambda _k  = 0,\,\,\,\eta _k  =  - (1/2),$  $k = \overline {0,m - 1}, $ следует справедливость равенства
\begin{equation} \label{koek13}
\Delta ^p J_0^{ - 1} \left( \begin{gathered}
  \,\,\,\,\,\alpha  \hfill \\
   - 1/2 \hfill \\
\end{gathered}  \right)g(x) = J_0^{ - 1} \left( \begin{gathered}
  \,\,\,\,\,\alpha  \hfill \\
   - 1/2 \hfill \\
\end{gathered}  \right)\Delta _B^p g(x)
\end{equation}
где  $\Delta ^p  = \left[ {\sum\limits_{k = 1}^n {\dfrac{{\partial ^2 }}{{\partial x_k^2 }}} } \right]^p$
- $p$ -ая степень многомерного оператора Лапласа, а
\[
\Delta _B^p  = \left[ {\sum\limits_{k = 1}^n {\left( {B_{\alpha _k  - (1/2)}^{x_k } } \right)} } \right]^p  = \left[ {\sum\limits_{k = 1}^n {\left( {\frac{{\partial ^2 }}
{{\partial x_k^2 }} + \frac{{2\alpha _k }}
{{x_k }}\frac{\partial }
{{\partial x_k }}} \right)} } \right]^p.
\]

Доказанные теоремы позволяют сводить многомерные уравнения высокого порядка с сингулярными коэффициентами к полигармоническим, поликалорическим и поливолновым уравнениям и тем самым поставить и исследовать корректные  начальные и граничные задачи для таких уравнений.

\section{Решение поставленной задачи в случае однородного уравнения}

Пусть  существует решение однородного уравнения $L_\gamma ^m (u) = 0,$ удовлетворяющее условиям (2) и (3). Это решение ищем в виде
\begin{equation} \label{sp14}
u(x,t) = J_0^{(x)} \left( \begin{gathered}
  \alpha  \hfill \\
  \eta  \hfill \\
\end{gathered}  \right)U(x,t),
\end{equation}
где  $\alpha ,\eta  \in R^n, $ причем $\alpha _k  = \gamma _k  + (1/2) > 0,$ $\eta _k  =  - (1/2),\,\,k = \overline {1,n}, $ а $U(x,t)$  -неизвестная и достаточное число раз дифференцируемая функция, а $J_0^{(x)} \left( \begin{gathered}
  \alpha  \hfill \\
  \eta  \hfill \\
\end{gathered}  \right)
$  - многомерный оператор Эрдейи-Кобера дробного порядка (6), действующий по переменной $x \in R^n. $

Подставляя (14) в граничные условия (3), а затем в уравнение (1) и начальные условия (2), и используя теорему 5 при $L^{(t)} \equiv \partial /\partial t,$  получим следующую задачу  нахождения решения  $U(x,t)$   уравнения
\begin{equation} \label{eq15}
\left( {\frac{\partial }{{\partial t}} - \Delta } \right)^m U(x,t) = 0,\,\,(x,t) \in \Omega,
\end{equation}
удовлетворяющим  начальным
\begin{equation} \label{ic16}
\left. {\frac{{\partial ^k U}}{{\partial t^k }}} \right|_{t = 0}  = \Phi _k (x), \,\,
x \in R^n ,\,\,\,k = \overline {0,m - 1},
\end{equation}
и однородным граничным условиям
\begin{equation} \label{bc17}
\left. {\frac{{\partial ^{2k + 1} U}}{{\partial x_j^{2k + 1} }}} \right|_{x_j  = 0}  = 0,
t > 0,\,\,\,\,j = \overline {1,n} ,\,\,\,\,\,\,k = \overline {0,m - 1},
\end{equation}
где  $ \Phi _k (x) = J_0^{ - 1} \left( \begin{gathered} \alpha  \hfill \\  \eta  \hfill \\ \end{gathered}  \right)\varphi _k (x), $ $\eta _k  =  - (1/2),$ $(k = \overline {0,m - 1} ),$ $J_0^{ - 1} \left( \begin{gathered}  \alpha  \hfill \\   \eta  \hfill \\ \end{gathered}  \right)$ - обратный оператор (7).

	Учитывая граничные условия (17), продолжим функции $\Phi _k (x)$  четным образом на $x_k  < 0,$ $(k = \overline {0,m - 1} )$  и продолженные функции обозначим через $\tilde \Phi _k (x).$
Тогда в области $\tilde \Omega  = \{ (x,y):\,x \in R^n ,\,\,\,t > 0\} $  получим задачу нахождения решения уравнения (15), удовлетворяющее начальным условиям
\begin{equation} \label{ic18}
\left. {\frac{{\partial ^k U}}{{\partial t^k }}} \right|_{t = 0}  = \tilde \Phi _k (x), \,\, x \in R,\,\,\,k = \overline {0,m - 1},
\end{equation}

	Введем обозначения: $W_0 (x,t) = U(x,t)$ и $W_k (x,t) = \left( {\dfrac{\partial }{{\partial t}} - \Delta } \right)^k W_0 (x,t).$
 В этих обозначениях задача {(15), (18)} эквивалентна к следующей задаче  о нахождении функций $W_k (x,t),\,\,k = \overline {0,m - 1}, $ удовлетворяющих системе уравнений
\begin{equation} \label{eq19}
\left\{ \begin{gathered}
  \frac{{\partial W_k }}
{{\partial t}} - \Delta W_k  = W_{k + 1} ,\,\,(x,t) \in \tilde \Omega ,\,\,\,k = \overline {0,m - 2} , \hfill \\
  \frac{{\partial W_{m - 1} }}
{{\partial t}} - \Delta W_{m - 1}  = 0,\,\,\,(x,t) \in \tilde \Omega  \hfill \\
\end{gathered}  \right.
\end{equation}
и начальным условиям
\begin{equation} \label{ic20}
W_k (x,0) = F_k (x), \,\, x \in R^n ,\,\,\,\,\,k = \overline {0,m - 1},
\end{equation}
где
\begin{equation} \label{if21}
F_k (x) = \sum\limits_{j = 0}^k {( - 1)^{k - j} C_k^j \Delta^{k - j} \tilde \Phi _j (x)}, \,\,k = \overline {0,m - 1},
\end{equation}
$C_k^j  = k!/[j!(k - j)!]$ - биномиальные коэффициенты.

При решении задачи {(19), (20)} воспользуемся следующей леммой.
\begin{lemma} \label{lm1}
Если $g(x) \in L_1 (R^n ),$ то имеет место равенство
\[
\int\limits_0^t {\frac{{d\tau }}
{{\left( {2\sqrt {\pi (t - \tau )} } \right)^n }}\int\limits_{R^n } {\exp \left[ { - \frac{{\left| {x - y} \right|^2 }}
{{4(t - \tau )}}} \right]\left\{ {\frac{1}
{{\left( {2\sqrt {\pi \tau } } \right)^n }}\int\limits_{R^n } {g(\eta )\exp \left[ { - \frac{{(y - \eta )^2 }}
{{4\tau }}} \right]d\eta } } \right\}dy} }
\]
\begin{equation} \label{lm22}
 = \frac{t}
{{\left( {2\sqrt {\pi t} } \right)^n }}\int\limits_{R^n } {g(\eta )\exp \left[ { - \frac{{\left| {\eta  - x} \right|^2 }}
{{4t}}} \right]dy}.
\end{equation}
\end{lemma}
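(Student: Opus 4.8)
The plan is to recognize the kernel $(2\sqrt{\pi s})^{-n}\exp(-|z|^2/(4s))$ appearing in both brackets as the fundamental solution $E(z,s)$ of the heat equation $\partial_s-\Delta$ on $R^n$, and to exploit its semigroup (Chapman--Kolmogorov) property. Writing $v(y,\tau)=\int_{R^n}g(\eta)E(y-\eta,\tau)\,d\eta$ for the inner integral, the left-hand side of \eqref{lm22} becomes $\int_0^t\Big(\int_{R^n}E(x-y,t-\tau)\,v(y,\tau)\,dy\Big)d\tau$. The heart of the argument is the identity
\[
\int_{R^n}E(x-y,t-\tau)\,E(y-\eta,\tau)\,dy=E(x-\eta,t),\qquad 0<\tau<t,
\]
which says that the $\tau$-dependence cancels completely.

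First I would prove this Gaussian convolution identity by completing the square. Setting $a=1/(4(t-\tau))$ and $b=1/(4\tau)$, the exponent $-a|x-y|^2-b|y-\eta|^2$ splits coordinatewise, and in each coordinate one writes it as $-(a+b)\big(y_i-\tfrac{ax_i+b\eta_i}{a+b}\big)^2-\tfrac{ab}{a+b}(x_i-\eta_i)^2$, integrates the Gaussian in $y_i$ over $R$ to produce a factor $\sqrt{\pi/(a+b)}$, and uses the two elementary relations $\tfrac{ab}{a+b}=\tfrac1{4t}$ and $a+b=\tfrac{t}{4\tau(t-\tau)}$. Collecting the $n$ coordinate factors against the normalizing constants $(2\sqrt{\pi(t-\tau)})^{-n}(2\sqrt{\pi\tau})^{-n}$ leaves precisely $(2\sqrt{\pi t})^{-n}\exp(-|x-\eta|^2/(4t))=E(x-\eta,t)$. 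This is the only genuine computation in the proof and the one place where a slip is easy; everything afterwards is bookkeeping.

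Next I would justify interchanging the order of integration by Fubini's theorem: since $g\in L_1(R^n)$ and the Gaussian factors are integrable in $y,\eta$ (and bounded for $\tau$ ranging over a compact subset of $(0,t)$), the triple integral of the absolute value is finite, so Fubini applies after the standard decomposition $g=g_+-g_-$. After the interchange the integration in $y$ and $\eta$ gives $\int_{R^n}g(\eta)\Big(\int_{R^n}E(x-y,t-\tau)E(y-\eta,\tau)\,dy\Big)d\eta=\int_{R^n}g(\eta)E(x-\eta,t)\,d\eta$, a quantity that no longer depends on $\tau$.

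Finally, the outer integral $\int_0^t d\tau$ of this $\tau$-independent quantity simply multiplies it by $t$, yielding
\[
\frac{t}{(2\sqrt{\pi t})^n}\int_{R^n}g(\eta)\exp\Big[-\frac{|\eta-x|^2}{4t}\Big]\,d\eta,
\]
which is the asserted right-hand side of \eqref{lm22} (the $dy$ and $(y-\eta)^2$ in the displayed formula being evident misprints for $d\eta$ and $|\eta-x|^2$). The main obstacle, such as it is, is keeping precise track of the powers of $2$, $\pi$, $\tau$ and $t-\tau$ in the completing-the-square step so that they cancel exactly; conceptually the lemma is nothing more than the semigroup property of the heat kernel integrated over the time variable.
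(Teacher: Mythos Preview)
Your proposal is correct and follows essentially the same route as the paper: swap the $y$- and $\eta$-integrations by absolute convergence, evaluate the Gaussian integral in $y$ coordinatewise by completing the square (the paper invokes the one-dimensional formula $\int_{-\infty}^{+\infty}e^{-p\xi^2-q\xi}\,d\xi=\sqrt{\pi/p}\,e^{q^2/(4p)}$ instead of framing it as the heat-kernel semigroup identity, but this is the same computation), obtain a $\tau$-independent integrand, and then the $\tau$-integral contributes the factor $t$.
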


\begin{proof}  В левой части равенства (22), в силу равномерной сходимости несобственных интегралов, сделаем перестановку порядка интегрирования по $\eta $ и по $y.$ Затем, пользуясь формулой~\cite{pbm}
\[
\int\limits_{ - \infty }^{ + \infty } {\exp \left[ { - p\xi ^2  - q\xi } \right]d\xi }  = \sqrt {\frac{\pi }
{p}} \exp \left( {\frac{{q^2 }}
{{4p}}} \right),\,\,\,\,\,\operatorname{Re} p > 0,
\]
вычислив внутренний интеграл по $y,$ получим
\[
\prod\limits_{j = 1}^n {\int\limits_{ - \infty }^{ + \infty } {\exp \left[ { - \frac{{(x_j  - y_j )^2 }}
{{4(t - \tau )}} - \frac{{(y_j  - \eta _j )^2 }}
{{4\tau }}} \right]dy_j } }  =
\]
\begin{equation} \label{for23}
= \left[ {2\frac{{\sqrt \pi  }}{{\sqrt t }}\sqrt {\tau (t - \tau )} } \right]^n \exp \left[ { - \frac{{\left| {\eta  - x} \right|^2 }}{{4t}}} \right].
\end{equation}

Подставляя (23) в левую часть равенства (22), после сокращения подобных членов, получим справедливость утверждения леммы 1.
\end{proof}

Вернемся к исследованию задачи \{(19), (20)\}. Последовательно решая каждое уравнение системы (19) начиная с последнего, с учетом начальных условий (20) и леммы 1, находим решение задачи \{(19), (20)\}. Затем, учитывая $W_0 (x,t) = U(x,t),$ получим решение  задачи \{(15), (17)\} в виде
\begin{equation} \label{sp24}
U(x,t) = \left( {2\sqrt {\pi t} } \right)^{ - n} \sum\limits_{k = 0}^{m - 1} {\frac{{t^k }}
{{k!}}\int\limits_{R^n } {F_k (s)\exp \left[ { - \frac{{\left| {s - x} \right|^2 }}
{{4t}}} \right]ds} },
\end{equation}
где $F_k (x)$ ($k = \overline {0,m - 1} $) - известные функции, определяемые через заданные начальные функции равенствами (21).

	Учитывая четность функций $F_k (x),$ $k = \overline {0,m - 1}, $ равенство (24) перепишем в виде
\begin{equation} \label{sp25}
U(x,t) = \sum\limits_{k = 0}^{m - 1} {\frac{{t^k }}{{k!}}U_k (x,t)},
\end{equation}
здесь
\begin{equation} \label{sp26}
U_k (x,t) = \int\limits_{R_ + ^n } {F_k (s)G(x,t,s)ds},
\end{equation}
\[
G(x,s,t) = \prod\limits_{j = 1}^n {G_0 (x_j ,s_j ,t)},
\]
\[
G_0 (x_j ,s_j ,t) = \frac{1}
{{2\sqrt {\pi t} }}\left\{ {\exp \left[ { - \frac{{(s_j  - x_j )^2 }}
{{4t}}} \right] + \exp \left[ { - \frac{{(s_j  + x_j )^2 }}
{{4t}}} \right]} \right\}.
\]

	Чтобы исследовать поведение функций $F_k (x),$  $k = \overline {0,m - 1} $  сделаем некоторые преобразования. Для этого докажем следующую лемму.
\begin{lemma} \label{lm2}
Пусть функции $\varphi _j (x) \in C^{2(m - j) - 1} (R_ + ^n ),$ $j = \overline {0,m - 1} $  непрерывны,  ограничены и все производные начальных функций $\varphi _j (x),$ до порядка $2(m - j) - 1,$ $j = \overline {0,m - 1} $  включительно, обращаются в нуль при $x_k  = 0,\,\,k = \overline {1,n}. $
Тогда имеют место равенства
\begin{equation} \label{lm27}
\mathop {\lim }\limits_{x_k  \to 0} x_k^{2\alpha _k } \frac{\partial }{{\partial x_k }}\left[ {B_{\alpha _k  - (1/2)}^{x_k } } \right]^l \varphi _j (x) = 0, \,\, k = \overline {1,\,n}, \,\, l = \overline {0,m - 1}, \,\, j = \overline {0,m - 1},
\end{equation}
\begin{equation} \label{lm28}
\mathop {\lim }\limits_{x_k  \to 0} [B_{\gamma _k }^{x_k } ]^i \varphi _{j - i} (x) = 0, \,\, k = \overline {1,\,n}, \,\, i = \overline {0,j}, \,\, j = \overline {0,m - 1}.
\end{equation}
\end{lemma}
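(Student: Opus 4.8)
The plan is to reduce both limit relations to a one–dimensional statement about a single Bessel operator. For a fixed $k$ the operators $B_{\alpha_k-1/2}^{x_k}$ and $B_{\gamma_k}^{x_k}$ act only in the variable $x_k$, the remaining variables being passive parameters, so I would regard each $\varphi_j$ as a function of $x_k>0$ of class $C^{2(m-j)-1}$ and analyse its behaviour as $x_k\to0$. The engine of the proof is the elementary identity
\[
B_\nu^{x}\big(x^{2l}\big)=4l\,(l+\nu)\,x^{2l-2}\quad(l\ge1),\qquad B_\nu^{x}(1)=0,
\]
for $B_\nu^{x}=\partial_x^2+\dfrac{2\nu+1}{x}\,\partial_x$, together with its remainder version: if $R\in C^{N}$ vanishes at $x=0$ to order $N$, then $R'(0)=0$ makes $x^{-1}R'(x)$ a function vanishing there to order $N-2$, so $B_\nu^{x}R\in C^{N-2}$ vanishes to order $N-2$. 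Because $\nu=\gamma_k>-1/2$ gives $l+\nu>0$ for every $l\ge1$, the operator $B_\nu^{x}$ kills the constant term and strictly lowers the order of vanishing at $x=0$ by two; iterating, $[B_\nu^{x}]^{p}$ sends a function vanishing to order $N$ at $x=0$ to one vanishing to order $N-2p$.

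The second step is to read off the boundary behaviour of the data. The hypotheses on $\varphi_j$ and on the even continuations of its $x_k$–derivatives up to order $2(m-j)-1$ say precisely that $\partial_{x_k}^{\,s}\varphi_j|_{x_k=0}=0$ for $0\le s\le 2(m-j)-1$ and every $k=\overline{1,n}$; equivalently, each $\varphi_j$ vanishes to order $2(m-j)-1$ at every coordinate hyperplane $x_k=0$. (Here one uses that an even $C^{1}$ continuation has vanishing derivative at the origin, applied successively to $\varphi_j,\partial_{x_k}\varphi_j,\dots$.) Granting this, \eqref{lm28} is immediate: for $i=\overline{0,j}$ the function $\varphi_{j-i}\in C^{2(m-j+i)-1}$ vanishes to order $2(m-j+i)-1$ at $x_k=0$, so by the iterated estimate $[B_{\gamma_k}^{x_k}]^{\,i}\varphi_{j-i}$ vanishes to order $2(m-j+i)-1-2i=2(m-j)-1\ge1$ there, and in particular its limit as $x_k\to0$ is zero. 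For \eqref{lm27}, when $l\ge1$ the constant part is annihilated and $[B_{\alpha_k-1/2}^{x_k}]^{\,l}\varphi_j$ lies in $C^{2(m-j)-1-2l}$ and vanishes to order $2(m-j)-1-2l$ at $x_k=0$; for $l=0$ one argues with $\varphi_j$ directly. In both cases $\partial_{x_k}[B_{\alpha_k-1/2}^{x_k}]^{\,l}\varphi_j$ extends continuously up to $x_k=0$, hence stays bounded near it, while $x_k^{2\alpha_k}\to0$ since $2\alpha_k=2\gamma_k+1>0$; the product tends to zero, which is \eqref{lm27}. As \eqref{lm27} is exactly the hypothesis under which formula \eqref{koek13} (obtained from Theorem \ref{th7}) applies to $\varphi_j$, and \eqref{lm28} supplies the boundary regularity needed for the odd extensions $\tilde\Phi_j$ used afterwards, this proves the lemma.

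The only point demanding genuine care is the derivative budget: one application of $B$ costs two $x_k$–derivatives and \eqref{lm27} asks for one more for the outer $\partial_{x_k}$, so one must verify that the class $C^{2(m-j)-1}$ assumed for $\varphi_j$ — and the matching order $2(m-j)-1$ of vanishing at the boundary — are enough for every triple $(i,j,l)$ actually used (in the Green's–function construction only $i,l\le m-1-j$ occur), and that the order of vanishing is tracked consistently through each iteration of $B$. Everything else is a direct substitution into $B_\nu^{x}(x^{2l})=4l(l+\nu)x^{2l-2}$.
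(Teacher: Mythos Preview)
Your route is genuinely different from the paper's. The paper does not track orders of vanishing via $B_\nu(x^{2l})=4l(l+\nu)x^{2l-2}$. Instead it writes $[B_{\alpha_k-1/2}^{x_k}]^l$ through the binomial formula and the combinatorial identity
\[
\left(\frac{1}{x}\frac{d}{dx}\right)^{p}h(x)=\sum_{r=1}^{p}(-1)^{r+1}A_{pr}\,\frac{h^{(p-r+1)}(x)}{x^{\,p+r-1}},
\]
so that $x_k^{2\alpha_k}\partial_{x_k}[B_{\alpha_k-1/2}^{x_k}]^{l}\varphi_j$ becomes a finite sum of terms of the form $\varphi_j^{(s)}(x)\,x_k^{2\alpha_k-q}$; it then applies L'H\^opital's rule $q$ times to each term and reduces to $x_k^{2\alpha_k}\cdot(\text{bounded derivative of }\varphi_j)\to 0$. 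Equation~\eqref{lm28} is then dispatched with ``proved analogously''. So the paper's mechanism for \eqref{lm27} uses only \emph{boundedness} of sufficiently many derivatives, not their vanishing; your final sentence (``bounded times $x_k^{2\alpha_k}\to0$'') lands at the same place, but by a different and more conceptual path.

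There is, however, a genuine gap in your second step. You assert that the hypotheses give $\partial_{x_k}^{\,s}\varphi_j|_{x_k=0}=0$ for \emph{all} $0\le s\le 2(m-j)-1$. The lemma only assumes that the \emph{odd}-order $x_k$-derivatives of $\varphi_j$ (up to order $2(m-j)-1$) are bounded and vanish at $x_k=0$; the functions $\varphi_j$ themselves and their even-order derivatives are merely bounded and continuous. Your justification (``an even $C^1$ continuation has vanishing derivative at the origin, applied successively to $\varphi_j,\partial_{x_k}\varphi_j,\dots$'') would require that every $\partial_{x_k}^{\,s}\varphi_j$ admit an even $C^1$ extension, which is not what is assumed: the vanishing of the odd derivatives of $\varphi_j$ says that $\varphi_j$ itself extends evenly, hence $\partial_{x_k}\varphi_j$ extends \emph{oddly}, not evenly. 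Under the actual hypothesis a function like $\varphi_j\equiv 1$ is admissible, so your claim that $\varphi_j$ vanishes to order $2(m-j)-1$ at $x_k=0$ is false in general. For \eqref{lm27} this does not hurt you --- your last line uses only boundedness and $x_k^{2\alpha_k}\to0$, exactly as the paper does --- but your argument for \eqref{lm28}, which rests on the (incorrect) full-order vanishing of $\varphi_{j-i}$, does not go through as written. If you want to keep the Taylor/monomial viewpoint for \eqref{lm28}, you must redo that step using only the vanishing of the odd-order derivatives; alternatively, follow the paper and expand $[B_{\gamma_k}^{x_k}]^{i}$ via the identity above and argue by L'H\^opital.
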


\begin{proof}  Методом математической индукции нетрудно убедиться в справедливости следующего равенства
\begin{equation} \label{lm29}
\left( {\frac{1}{x}\frac{d}{{dx}}} \right)^p h(x) = \sum\limits_{j = 1}^p {( - 1)^{j + 1} A_{pj} \frac{{h^{(p - j + 1)} (x)}}{{x^{p + j - 1} }}},
\end{equation}
где  $A_{pj} $ - постоянные, определяемые из следующих рекуррентных равенств
\[
A_{(p + 1)1}  = A_{p1}  = 1,\,\,p \geqslant 1, \,\, A_{(p + 1)j}  = (p + j - 1)A_{p(j - 1)}  + A_{pj}, \,\,
p \geqslant 2,\,\,j = \overline {2,p},
\]
\[
 A_{(p + 1)(p + 1)}  = (2p - 1)A_{pp}  = (2p - 1)!!,\,\,\,p \geqslant 1.
\]

Равенство (27) перепишем в виде
\[
\mathop {\lim }\limits_{x_k  \to 0} H(x) = \mathop {\lim }\limits_{x_k  \to 0} x_k^{2\alpha _k } \frac{\partial }
{{\partial x_k }}\left[ {B_{\alpha _k  - (1/2)}^{x_k } } \right]^l \varphi _j (x) =
\]
\[
 = \mathop {\lim }\limits_{x_k  \to 0} x_k^{1 + 2\alpha _k } \sum\limits_{q = 0}^l {C_l^q (2\alpha _k )^{l - q} \left( {\frac{1}
{{x_k }}\frac{\partial }
{{\partial x_k }}} \right)^{l - q + 1} \varphi _j^{(2q)} (x)}.
\]

Учитывая (29), имеем
\[
\mathop {\lim }\limits_{x_k  \to 0} H(x) = \sum\limits_{q = 0}^l {C_l^q (2\alpha _k )^{l - q} } \sum\limits_{j = 1}^{l - q + 1} {( - 1)^{j + 1} A_{(l - q + 1)j} \mathop {\lim }\limits_{x_k  \to 0} \frac{{\varphi _j^{(l - q - j + 2)} (x)}}{{x_k^{l - q + j - 2 - 2\alpha _k } }}}.
\]

Применяя к последнему равенству правило Лопиталя $l - q + j - 2$  раз  \cite{iss} и учитывая условие доказываемой леммы, получим
\[	
\mathop {\lim }\limits_{x_k  \to 0} \frac{{\varphi _j^{(l - q - j + 2)} (x)}}
{{x_k^{l - q + j - 2 - 2\alpha _k } }} = \frac{{\mathop {\lim }\limits_{x_k  \to 0} x_k^{2\alpha _k } \varphi _j^{(2(l - q))} (x)}}
{{(l - q + j - 2)!}} = 0
\]
	
	Отсюда следует справедливость равенства (27). Равенство (28) доказываются аналогично. Лемма 2 доказано.
\end{proof}

В силу леммы 2 для функций $\Phi _k (x)$ выполняются условия теоремы 7. Поэтому, принимая во внимание формулу (13), равенство (21) при $x_k  > 0,\,\,k = \overline {1,n} $
 можно представить в виде
\begin{equation} \label{if30}
F_k (x) = J_0^{ - 1} \left( \begin{gathered}
  \,\,\,\,\,\,\alpha  \hfill \\
   - 1/2 \hfill \\
\end{gathered}  \right)f_k (x), \,\, k = \overline {0,m - 1},
\end{equation}
где
\begin{equation} \label{if31}
f_k (x) = \sum\limits_{j = 0}^k {( - 1)^j C_k^j \Delta _B^j \varphi _{k - j} (x)}, \,\, k = \overline {0,m - 1}.
\end{equation}

Учитывая вид обратного оператора (7), равенства (30) представим в виде  $F_k (x) = [\partial ^n /(\partial x_1 \partial x_2 ...\partial x_n )]\bar F_k (x),$ где $k = \overline {0,m - 1},$
\[
\bar F_k (x) = \prod\limits_{j = 1}^n {\left[ {\frac{1}
{{\Gamma (1 - \alpha _j )}}} \right]\int\limits_0^{x_1 } {\int\limits_0^{x_2 } {...\int\limits_0^{x_n } {\prod\limits_{j = 1}^n {\left[ {(x_j^2  - s_j^2 )^{ - \alpha _j } s_j^{2\alpha _j } } \right]} } } f_k (s)ds_1 ds_2 ...ds_n } }.
\]

	Заметим, что в силу леммы 2  из равенства (31) следует, что  функции $f_k (x),$ $k = \overline {0,m - 1} $
 при $x_j  \geqslant 0$ будут  непрерывными, ограниченными и $\left. {f_k (x)} \right|_{x_j  = 0}  = 0,$ в силу чего из последнего равенства, имеем
\begin{equation} \label{if32}
\left. {\bar F_k (x)} \right|_{x_j  = 0}  = 0,\,\,\,j = \overline {1,n}, \,\, k = \overline {0,m - 1}.
\end{equation}

	Принимая во внимание (32), в равенстве (26) выполним интегрирования по частям. Затем, подставив в это равенство значение функций $\bar F_k (x),$ получим
\begin{equation} \label{sp33}
U_k (x,t) =  - \prod\limits_{j = 1}^n {\left[ {\frac{1}
{{\Gamma (1 - \alpha _j )}}} \right]} \int\limits_{R_ + ^n } {f_k (s)\prod\limits_{j = 1}^n {\left[ {s_j^{2\alpha _j } G_1 (x_j ,s_j ,t)} \right]} \,ds},
\end{equation}
где
\begin{equation} \label{gf34}
G_1 (x_j ,s_j ,t) = \int\limits_{s_j }^{ + \infty } {(y_j^2  - s_j^2 )^{ - \alpha _j } \frac{\partial }
{{\partial y_j }}G_0 (x_j ,y_j ,t)dy_j }.
\end{equation}

Вычислим интеграл (34). Применяя  формулу (\cite{pbm}, с. 451)
\[
\int\limits_0^{ + \infty } {e^{ - a\lambda ^2 } \cos (b\lambda )d\lambda }  = \sqrt {\frac{\pi }
{{4a}}} \exp \left[ { - \frac{{b^2 }}
{{4a}}} \right],\,\,\,\,\operatorname{Re} a > 0,
\]
функцию   $G_0 (x_j ,y_j ,t)$  представим в виде
\[
G_0 (x_j ,y_j ,t) = \frac{2}
{\pi }\int\limits_0^{ + \infty } {e^{ - t\lambda ^2 } \cos (x_j \lambda )\cos (y_j \lambda )d\lambda }.
\]

Отсюда, вычислим производную по $y_j $  и полученное выражение для функции $G_{0y} $  подставим в (34). Затем, принимая во внимание равномерную сходимость интегралов, меняем порядок интегрирования и, применяя формулу Мелера-Сонина (\cite{be}, с. 93), вычислим внутренний интеграл. В результате находим
\[
G_1 (x_j ,s_j ,t) =
\]
\begin{equation} \label{gf35}
=  - \frac{{2^{(1/2) - \alpha _j } }}
{{\sqrt \pi  }}\Gamma (1 - \alpha _j )s_j^{(1/2) - \alpha } \int\limits_0^{ + \infty } {e^{ - t\lambda ^2 } \lambda ^{\alpha _j  + (1/2)} J_{\alpha _j  - (1/2)} (\lambda s_j )\cos (x_j \lambda )d\lambda},
\end{equation}
где  $J_v (z)$- функция Бесселя первого рода порядка $v$ (\cite{be}, с. 12).

Теперь, подставляя (33) в (25), а его в (14), после смены порядка интегрирования, получим
\begin{equation} \label{sp36}
u(x,t) =  - \prod\limits_{j = 1}^n {\left[ {\frac{{2x_j^{1 - 2\alpha _j } }}
{{\Gamma (\alpha _j )\Gamma (1 - \alpha _j )}}} \right]} \sum\limits_{k = 0}^{m - 1} {\,\frac{{t^k }}
{{k!}}\int\limits_{R_ + ^n } {f_k (s)\prod\limits_{j = 1}^n {s_j^{2\alpha _j } G_2 (x_j ,s_j ,t)} ds} }
\end{equation}
где
\begin{equation} \label{gf37}
G_2 (x_j ,s_j ,t) = \int\limits_0^{x_j } {(x_j^2  - \xi _j^2 )^{\alpha _j  - 1} \;G_1 (\xi _j ,s_j ,t)d\xi _j }
\end{equation}

Подставим в (37) выражение (35) функции $G_1$  и меняем порядок интегрирования. Затем, применяя формулу Пуассона (\cite{be}, с. 93),  вычислим внутренний интеграл. В итоге находим
\[
G_2 (x_j ,s_j ,t) =
\]
\[
= - \frac{1}
{2}\Gamma (\alpha _j )\Gamma (1 - \alpha _j )\left( {\frac{{s_j }}
{{x_j }}} \right)^{(1/2) - \alpha _j } \int\limits_0^\infty  {e^{ - t\lambda ^2 } J_{\alpha _j  - (1/2)} (s_j \lambda )J_{\alpha _j  - (1/2)} (x_j \lambda )\lambda d\lambda }.
\]

Далее, учитывая следующую формулу (\cite{be}, с. 60)
\[
\int\limits_0^\infty  {e^{ - t\lambda ^2 } J_\nu  (s\lambda )J_\nu  (x\lambda )\lambda d\lambda }  = \frac{1}
{{2t}}e^{ - \frac{{x^2  + s^2 }}
{{4t}}} I_\nu  \left( {\frac{{xs}}
{{2t}}} \right),
\]
$\operatorname{Re} \nu  >  - 1,\,\,\operatorname{Re} t > 0,$ имеем
\begin{equation} \label{gf38}
G_2 (x_j ,s_j ,t) =  - \frac{1}
{{4t}}\Gamma (\alpha _j )\Gamma (1 - \alpha _j )\left( {\frac{{s_j }}
{{x_j }}} \right)^{(1/2) - \alpha _j } e^{ - \frac{{x_j^2  + s_j^2 }}
{{4t}}} I_{\alpha _j  - (1/2)} \left( {\frac{{x_j s_j }}{{2t}}} \right),
\end{equation}
где  $I_v (z)$  - функция  Бесселя мнимого аргумента порядка $v$  (\cite{be}, с. 13).

Подставляя (38) в (36) и учитывая $\alpha _j  = \gamma _j  + (1/2) < 1$  и $\gamma _j  >  - (1/2),$ $j = \overline {1,n},$ находим окончательный вид решения однородного уравнения  $L_\gamma ^m (u) = 0,$
удовлетворяющее условиям (2) и (3) при $\left| {\gamma _j } \right| < 1/2,$ $j = \overline {1,n}:$
\begin{equation} \label{sp39}
u(x,t) = \frac{1}{{(2t)^n }}\prod\limits_{j = 1}^n {x_j^{ - \gamma _j } } \sum\limits_{k = 0}^{m - 1} {\frac{{t^k }}{{k!}}\,\int\limits_{R_ + ^n } {f_k (s)G(x,s,t)ds} },
\end{equation}
где $f_k (x) = \sum\limits_{j = 0}^k {( - 1)^j C_k^j \Delta _B^{k - j} \varphi _j (x)}, $
\[
G(x,s,t) = \prod\limits_{j = 1}^n {\left\{ {s_j^{\gamma _j  + 1} \exp \left[ { - \frac{{x_j^2  + s_j^2 }}
{{4t}}} \right]I_{\gamma _j } \left( {\frac{{x_j s_j }}
{{2t}}} \right)} \right\}}  =
\]
\begin{equation} \label{gf40}
 = \prod\limits_{j = 1}^n {\left[ {s_j^{\gamma _j  + 1} I_{\gamma _j } \left( {\frac{{x_j s_j }}
{{2t}}} \right)} \right]} \exp \left[ { - \frac{{\left| x \right|^2  + \left| s \right|^2 }}
{{4t}}} \right], \,\, \left| x \right|^2  = \sum\limits_{j = 1}^n {x_j^2 }.
\end{equation}

	Непосредственной проверкой можно убедиться, что справедлива следующая теорема.
\begin{theorem} \label{th8}
Пусть  $\left| {\gamma _j } \right| < 1/2,$ $j = \overline {1,n}, $ а функции $\varphi _j (x) \in C^{2(m - j) - 1} (R_ + ^n ),$ $j = \overline {0,m - 1} $  непрерывны, ограничены и все производные функций $\varphi _j (x),$
до порядка $2(m - j) - 1,$ $j = \overline {0,m - 1} $  включительно, обращаются в нуль при $x_k  = 0,\,\,k = \overline {1,n}. $
Тогда функция $u(x,t),$ определяемая равенством (39), является классическим решением однородного уравнения $L_\gamma ^m (u) = 0,$ удовлетворяющее условиям (2) и (3).
\end{theorem}

\section{Решение поставленной задачи в случае неоднородного уравнения}

Теперь рассмотрим задачу нахождения решения неоднородного уравнения (1) удовлетворяющего однородным граничным условиям (3) и следующим однородным начальным условиям
\begin{equation} \label{ic41}
\left. {\frac{{\partial ^k u}}{{\partial t^k }}} \right|_{t = 0}  = 0, \,\, x \in R_ + ^n, \,\,
k = \overline {0,m - 1}.
\end{equation}

	Чтобы найти решение задачи \{(1), (3), (41)\} воспользуемся следующим аналогом второго принципа Дюамеля для уравнения высокого порядка.

\begin{theorem} \label{th9}
Пусть функция $U(x,t,\tau ),$ зависящая от параметра $\tau, $ является решением однородного уравнения $L_\gamma ^m (U) = 0,\,\,x \in R_ + ^n ,\,\,\,t > \tau, $ удовлетворяющего однородным граничным условиям (3) при $t > \tau $  и следующим начальным условиям
\begin{equation} \label{ic42}
\left. {\frac{{\partial ^k U}}{{\partial t^k }}} \right|_{t = \tau }  = 0,\,x \in R_ + ^n ,\,\,\,\,k = \overline {0,m - 2}, \,\, \left. {\frac{{\partial ^{m - 1} U}}{{\partial t^{m - 1} }}} \right|_{t = \tau }  = f(x,\tau ),\,x \in R_ + ^n.
\end{equation}
	Тогда функция
\begin{equation} \label{sp43}
V(x,t) = \int\limits_0^t {U(x,t,\tau )d\tau },
\end{equation}
будет решением задачи \{(1), (3), (41)\}.
\end{theorem}

\begin{proof}  Дифференцируя равенство (43) и учитывая (42), находим
\[
\frac{{\partial V}}
{{\partial t}} = \left. {U(x,t,\tau )} \right|_{\tau  = t}  + \int\limits_0^t {\frac{\partial }
{{\partial t}}U(x,t,\tau )d\tau }  = \int\limits_0^t {\frac{\partial }
{{\partial t}}U(x,t,\tau )d\tau },
\]
\[
\Delta _B V(x,t) = \int\limits_0^t {\Delta _B U(x,t,\tau )d\tau }.
\]

	Составим выражение
\[
L_\gamma ^1 (V) = \left( {\frac{\partial }
{{\partial t}} - \Delta _B } \right)V = \int\limits_0^t {L_\gamma ^1 \left( {U(x,t,\tau )} \right)d\tau }.
\]

	Повторяя этот процесс $m - 1$  раз, с учетом (42) получим
\begin{equation} \label{sp44}
L_\gamma ^{m - 1} (V) = \left( {\frac{\partial }
{{\partial t}} - \Delta _B } \right)^{m - 1} V = \int\limits_0^t {L_\gamma ^{m - 1} \left( {U(x,t,\tau )} \right)d\tau }.
\end{equation}

	Далее, дифференцируя (44) по $t,$ находим
\begin{equation} \label{sp45}
\frac{\partial }{{\partial t}}L_\gamma ^{m - 1} (V) = \left. {L_\gamma ^{m - 1} \left( {U(x,t,\tau )} \right)} \right|_{\tau  = t}  + \int\limits_0^t {\frac{\partial }{{\partial t}}L_\gamma ^{m - 1} \left( {U(x,t,\tau )} \right)d\tau }.
\end{equation}

	В силу (42), имеем
\[
\left. {L_\gamma ^{m - 1} \left( {U(x,t,\tau )} \right)} \right|_{\tau  = t}  = \left( {\frac{\partial }
{{\partial t}} - \Delta _B } \right)^{m - 1} \left. U \right|_{\tau  = t}  =
\]
\[
 = \sum\limits_{k = 0}^{m - 1} {C_{m - 1}^k [ - \Delta _B ]^{m - k - 1} \left. {\frac{{\partial ^k U}}
{{\partial t^k }}} \right|_{\tau  = t}  = } \left. {\frac{{\partial ^{m - 1} U}}
{{\partial t^{m - 1} }}} \right|_{\tau  = t}  = f(x,t).
\]

	Принимая во внимание последнее равенство, из (45) получим
\begin{equation} \label{sp46}
\frac{\partial }{{\partial t}}L_\gamma ^{m - 1} (V) = f(x,t) + \int\limits_0^t {\frac{\partial }
{{\partial t}}L_\gamma ^{m - 1} \left( {U(x,t,\tau )} \right)d\tau }.
\end{equation}

	Применяя к равенству (44) оператор $\Delta _B, $ а затем, вычитая полученное равенство из (46), в силу $L_\gamma ^m (U) = 0,$ находим $L_\gamma ^m (V) = f(x,t).$

	Непосредственным вычислением можно проверить, что функция $ V(x,t), $  определяемая равенством (43), удовлетворяет граничным условиям (3) и начальным условиям (41).  Теорема 9 доказана.
\end{proof}

	Чтобы найти решение однородного уравнения $L_\gamma ^m (U) = 0,\,\,x \in R_ + ^n ,\,\,\,t > \tau, $
удовлетворяющего граничным условиям (3) и начальным условиям (41) сделаем замену переменных $t_1  = t - \tau. $
Тогда решение этой задачи даётся формулой (39), в которой $t$  заменено на $t_1  + \tau. $
Из этой формулы возвращаясь к старым переменным $t$  и $\tau, $ имеем
\[
U(x,t,\tau ) = \frac{{(t - \tau )^{m - n - 1} }}
{{2^n (m - 1)!}}\prod\limits_{j = 1}^n {\left[ {x_j^{ - \gamma _j } } \right]} \,\int\limits_{R_ + ^n } {f(s,\tau )G(x,s,t - \tau )ds}.
\]

	Принимая во внимание последнее равенство, из (43) находим решение задачи \{(1), (3), (41)\} в следующем виде
\[
V(x,t) = \frac{1}
{{2^n (m - 1)!}}\prod\limits_{j = 1}^n {\left[ {x_j^{ - \gamma _j } } \right]} \,\int\limits_0^t {(t - \tau )^{m - n - 1} d\tau } \int\limits_{R_ + ^n } {f(s,\tau )G(x,s,t - \tau )ds},
\]
где  $G(x,s,t)$  - функция, определяемая равенством (40).

\newpage
\begin{center}
{\bf ANALOG OF THE CAUCHY PROBLEM FOR THE INHOMOGENEOUS MANYDIMENSIONAL POLYCALORIC EQUATION WITH BESSEL OPERATOR}
\end{center}

Karimov Shakhobiddin Tuychiboyevich ---
Candidate of Physical and Mathematical Sciences, Docent,
Chair of the Department of Mathematics, Ferghana State
University. Ferghana, Murabbiylar str. 19, FerSU, 150100, Republic of Uzbekistan. \\
E-mail: shkarimov09@rambler.ru \\

MSC: 35K25, 35K30.

\begin{abstract}
In the work an explicit formula of a solution of analogue of a Cauchy problem for an inhomogeneous manydimensional polycaloric equation with Bessel operator was found. Manydimensional Erd\'{e}lyi-Kober operator  of the fractional order was applied to construction of a solution.
	
Key words:  Cauchy problem, the polycaloric equation, Erd\'{e}lyi-Kober operator, Bessel differential operator.
\end{abstract}
\bigskip


\begin{thebibliography}{9}

\bibitem{polyanin} {\bf Полянин А.Д.} Справочник по линейным уравнениям математической физики. М.: ФИЗМАТЛИТ, 2001. - 576 с.

\bibitem{chuprov} {\bf Чупров И.Ф., Канева Е.А., Мордвинов А.А.} Уравнения математической физики с приложениями к задачам нефтедобычи и трубопроводного транспорта газа. - Ухта: УГТУ, 2004. - 128 с.

\bibitem{krix1} {\bf Крехивский В. В., Матийчук М. И.} Фундаментальные решения и задача Коши для линейных параболических систем с оператором Бесселя // Докл. АН СССР. - 1968. - 181, № 6. - С. 1320 - 1323

\bibitem{krix2} {\bf Крехивский В. В., Матийчук М. И.} О краевых задачах для параболических систем с оператором Бесселя // Докл. АН СССР. - 1971. - 139, № 4. - С. 773 - 775.

\bibitem{colton} {\bf Colton D.} Cauchy's problem for a singular parabolic differential equation // J. Differential Equations, 8, (1970), pp. 250-257.

\bibitem{arena}  {\bf Arena O.} On a Singular Parabolic Equation Related to Axially Symmetric Heat Potentials // Annali di Matematica Pura ed Applicata, 1975. vol. 105, no. 1. pp. 347-393. doi: 10.1007/BF02414938.

\bibitem{kkl}  {\bf Киприянов И.А., Катрахов В. В., Ляпин В. М.} О краевых задачах в области общего вида для сингулярных параболических систем уравнений// Докл. АН СССР. - 1976. - 230, № 6. - С. 1271--1274.

\bibitem{krix3}  {\bf Крехивский В.В.} Теоремы единственности решений задачи Коши для уравнений с оператором Бесселя // Математическое моделирование физических процессов. - Киев: Ин-т математики АН УССР. - 1989. - С. 82-86.

\bibitem{veren}  {\bf Веренич И.И.} Внутренние оценки решений параболических уравнений с оператором Бесселя // ДАН УССР. - 1977. - №11. - С. 969-974.

\bibitem{tca} {\bf Терсенов С.А.} Параболические уравнения с меняющимся направлением времени. - Новосибирск: Наука, 1985. - 105 с.

\bibitem{ivas}  {\bf Ивасишен С.Д., Лавренчук В.П.} Об интегральном представлении решений параболической системы с оператором Бесселя // Нелинейные граничные задачи. - 1992. - Вып. 4. - С. 19-25.

\bibitem{mab}  {\bf Муравник А.Б.} Функционально-дифференциальные параболические уравнения: интегральные представления и качественные свойства решений задачи Коши // СМФН, 2014, том 52, с. 3-141.

\bibitem{jit} {\bf Житомирский Я.И.} Задача Коши для систем линейных уравнений в частных производных с дифференциальным оператором Бесселя // Матем. сб. - 1955. - Т. 36. - №2. - С. 299-310.

\bibitem{gjl}  {\bf Городецкий В.В., Житарюк И.В., Лавренчук В.П.} Задача Коши для линейных параболических уравнений с оператором Бесселя в пространствах обобщенных функций типа $S'$ - Черновицкий университет. Черновцы, 1993. - 29 с. - Библиогр. 10 назв. - Рус. - Деп. в УкрИНТЭИ 09.03.93, № 338, Ук 93.

\bibitem{skm}  {\bf Самко С.Г., Килбас А.А., Маричев О.И.} Интегралы и производные  дробного порядка и их приложения. Минск: Наука и техника, 1987. 702 с.

\bibitem{ksht1} {\bf Karimov Sh.T.} Multidimensional generalized Erd\'{e}lyi-Kober operator and its application to solving Cauchy problems for differential equations with singular coefficients. // Fract. Calc. Appl. Anal., Vol. 18, No 4 (2015), pp. 845-861.

\bibitem{be} {\bf Бейтмен Г., Эрдейи А.} Высшие трансцендентные функции, т.1,2. -М.: Наука, 1973. - 296 с.

\bibitem{Kar1} {\bf Каримов~Ш.\,Т.} Новые свойства обощенного оператора Эрдейи-Кобера и их приложения. //Докл. АН РУз. - 2014. № 5. C.~11--13.

\bibitem{Kar2} {\bf Каримов~Ш.\,Т.} О некоторых обобщениях свойств оператора Эрдейи-Кобера и их приложения // Вестник КРАУНЦ. Физ.-мат. науки. 2017. № 2(18). C.~20--40. DOI: 10.18454/2079-6641-2017-18-2-20-40.

\bibitem{pbm} {\bf Прудников А. П., Брычков Ю. А., Маричев О. И.} Интегралы и ряды. Т.1. Элементарные функции. 2-е изд., исправ. - М.: ФИЗМАТЛИТ, 2002. - 632 с.

\bibitem{iss} {\bf Ильин В.А., Садовничий В.А., Б.Х.Сендов.} Математический анализ. Продолжение курса. Под ред. А.Н.Тихонова.-М.: Изд-во МГУ, 1987, -358 с.


 \end{thebibliography}
\end{document}